\numberwithin{equation}{section}
\newtheorem*{theorem*}{Theorem}
\newtheorem{theorem}{Theorem}[section]
\newtheorem{proposition}[theorem]{Proposition}
\newtheorem{remark}[theorem]{Remark}
\DeclareMathOperator{\sign}{sign}
\title{Low-dimensional Cox-Ingersoll-Ross process}
\author{Yuliya Mishura$^{1,2}$ \\ \href{mailto:yuliyamishura@knu.ua}{yuliyamishura@knu.ua}
    \and Andrey Pilipenko$^{3,4}$\\\href{mailto:pilipenko.ay@gmail.com}{pilipenko.ay@gmail.com}  
    \and Anton Yurchenko-Tytarenko$^5$ \\ \href{mailto:antony@math.uio.no}{antony@math.uio.no}}
\date{%
    $^1$Department of Probability, Statistics and Actuarial Mathematics, Taras Shevchenko National University of Kyiv\\%
    $^2$Division of Mathematics and Physics, M\"alardalen University\\
    $^3$Institute of Mathematics, National Academy of Sciences of Ukraine\\
    $^4$Igor Sikorsky Kyiv Polytechnic Institute \\
    $^5$Department of Mathematics, University of Oslo
}
\begin{document}

\maketitle

\begin{abstract}
    The present paper investigates Cox-Ingersoll-Ross (CIR) processes of dimension less than 1, with a focus on obtaining an equation of a new type including local times for the square root of the CIR process. We utilize the fact that non-negative diffusion processes can be obtained by the transformation of time and scale of some reflected Brownian motion to derive this equation, which contains a term characterized by the local time of the corresponding reflected Brownian motion. Additionally, we establish a new connection between low-dimensional CIR processes and reflected Ornstein-Uhlenbeck (ROU) processes, providing a new representation of Skorokhod reflection functions.
\end{abstract}
\textbf{Keywords:} Cox-Ingersoll-Ross process, reflected Ornstein-Uhlenbeck process, Skorokhod problem, local time.\\
\textbf{MSC 2020:}  60H10; 60J55; 91G30

\section{Introduction}

\subsection{Background and motivation}

The squared Bessel process
\begin{equation}\label{eq: Bessel introduction}
    X(t) = x_0 + at + 2\int_0^t \sqrt{X(s)}dW(s)
\end{equation}
as well as its generalization Cox-Ingersoll-Ross (CIR) process 
\begin{equation}\label{0.q}
   X(t) = x_0 + \int_0^t \left(a - bX(s)\right)ds + \sigma \int_0^t \sqrt{X(s)}  dW(s)
\end{equation}
where $x_0 \ge 0$, $a>0$, $b\in\mathbb R$, and their respective square roots are widely used in various fields, in particular physics (see e.g. \cite{Bray2000, Horibe_Hosoya_Sakamoto_1983} and the overview in \cite[Section I]{Guarnieri_Moon_Wettlaufer_2017}) and finance \cite{CIR1981, CIR1985-1, CIR1985-2, Heston_1993}. One of the reasons for the popularity of these processes lies in the well-known fact (see e.g. \cite[Chapter IV, Example 8.2]{IW1977}) that $a > 0$ in \eqref{0.q} implies that $X(t) \ge 0$ for all $t\ge 0$ with probability 1, which is a natural property for multiple real-life phenomena. Furthermore, if the \textit{Feller condition} $2a \ge \sigma^2$ is satisfied, the paths of $X$ in \eqref{0.q} are strictly positive a.s., which turns out to be very useful in multiple cases. For example, the well-known Heston model \cite{Heston_1993} utilizes $Y := \sqrt{X}$ as stochastic volatility and, under the Feller condition,
$Y$ has the dynamics of the form
\begin{equation}\label{eq: Y equation for big a introduction}
    Y(t) = \sqrt{x_0} + \frac{1}{2}\int_0^t \left( \frac{a-\frac{\sigma^2}{4}}{Y(s)} - bY(s) \right)ds + \frac{\sigma}{2} W(t), \quad t\ge 0,
\end{equation}
since it is evident that
\begin{equation}\label{eq: 1 over Y introduction}
    \int_0^t \frac{1}{Y(s)}ds < \infty
\end{equation}
with probability 1 for all $t\ge 0$. This equation can be used for e.g. simulation purposes (see, for example, \cite{Alfonsi_2013, Dereich_Neuenkirch_Szpruch_2012, Neuenkirch_Szpruch_2014}); moreover, the measure change procedure associated with the Heston model naturally involves the inverse volatility $1/Y$ which has far more transparent properties when $X>0$ a.s.

At the same time, empirical considerations indicate that the Feller condition $2a \ge \sigma ^2$ can sometimes be too restrictive and models perform better when it is not satisfied. For instance, \cite[Section 3.4]{Kokholm_Stisen_2015} reports that the joint SPX-VIX fit of the Heston model turns out to be substantially better when the Feller condition is not demanded from the model parameters. Additionally, \cite[Example 10.2.6]{Alos_Garcia_Lorite_2021} indicates that the Heston model with violated Feller condition can reproduce the upward VIX ``smirk''. In other words, there are cases when the process $Y = \sqrt{X}$ under relatively small values of $a$ turns out to be more relevant for reflecting real-life phenomena despite the associated analytical challenges. Nevertheless, the majority of sources in the literature pay more attention to the case when the Feller condition is satisfied. Among notable exceptions, we mention \cite{AryasovaPilipenko_strong, Blei_2012, Cherny_2000, Cherny_Engelbert_2005} which discussed the SDEs of the type \eqref{eq: Y equation for big a introduction} when $\frac{\sigma^2}{4} < a < \frac{\sigma^2}{2}$. It is worth to note a more recent paper \cite{MYT2022} which establishes a connection between $Y = \sqrt{X}$ and a reflected Ornstein-Uhlenbeck (ROU) process 
\[
    Y_0(t) = \sqrt{x_0} - \frac{b}{2} \int_0^t Y_0(s) ds + \frac{\sigma}{2}W(t) + L_0(t), 
\]
where $L_0$ is the corresponding Skorokhod reflection function, i.e. a continuous non-decreasing process that has points of growth exclusively when $Y_0(t)=0$ and such that $Y_0(t) \ge 0$. In particular, it is established that $Y_0 = \sqrt{X}$ when $a=\frac{\sigma^2}{4}$ in \eqref{0.q}. Additionally, \cite[Theorem 2.4]{MYT2022} provides a new representation of $L_0$ in terms of a limit of the CIR processes: with probability 1, for any positive sequence $\{\varepsilon_n,~n\ge 1\}$ such that $\varepsilon_n \downarrow 0$, $n\to\infty$, and for all $T>0$
\begin{equation}\label{eq: uniform convergence 2, standard case}
    \sup_{t\in[0,T]} \left|L_0(t) - \frac{1}{2}\int_0^t \frac{\varepsilon_n}{\sqrt{X_{\varepsilon_n}(s)}}ds\right| \to 0, \quad n\to\infty,
\end{equation}
where
\[
    X_{\varepsilon_n}(t) = x_0 + \int_0^t \left(\frac{\sigma^2}{4} + \varepsilon_n - bX_{\varepsilon_n}(s)  \right) ds + \sigma\int_0^t \sqrt{X_{\varepsilon_n}(s)}dW(s).
\]

The representation of $L_0$ from \cite{MYT2022} described above essentially concerns convergence of the CIR square roots as $a\to \frac{\sigma^2}{4} +$ and does not cover what happens when $a\to\frac{\sigma^2}{4}-$. The reason is that analytic challenges associated to the process $Y = \sqrt{X}$ are especially acute when $0 < a < \frac{\sigma^2}{4}$, i.e. when the \textit{dimension} (see e.g. \cite{Maghsoodi1996}) $k:=\frac{4a}{\sigma^2}$ of the process \eqref{0.q} is less than 1. Indeed, the integral in \eqref{eq: 1 over Y introduction} is infinite after the first moment of hitting zero, the representation \eqref{eq: Y equation for big a introduction} does not hold and, furthermore, the process $Y = \sqrt{X}$ \textit{is not a semimartingale} (see e.g. Example 1.2 and Appendix 1 in \cite{Mijatovic_Urusov_2015} or \cite[p. 100]{Graversen_Peskir_1998}). In this regard, one must mention important contributions \cite{Bertoin_1990-1, Bertoin_1990b} which shed light on the behavior of $Y = \sqrt{X}$ when $X$ is the squared Bessel process \eqref{eq: Bessel introduction} of dimension $k = a \in (0,1)$. There, it is shown that $Y$ satisfies the equation of the form
\begin{equation}\label{eq: Y Bertoin intro}
    Y(t) = \sqrt{x_0} + W(t) + L(t),
\end{equation}
where 
\begin{equation}\label{eq: L Bertoin intro}
   L(t) := \frac{a-1}{2}\int_0^\infty y^{a-2} (\ell(t,y) - \ell(t,0))dy    
\end{equation}
with $\ell$ being a jointly continuous in $(t,y)$ normalized local time such that for any bounded measurable function $f$
\begin{equation}\label{eq: l Bertoin intro}
    \int_0^t f(Y(s))ds = \int_0^\infty f(y) y^{a-1} \ell(t,y)dy.   
\end{equation}

\subsection{Main results}

In our paper, we consider a more general case of the CIR process \eqref{0.q} with $b\in \mathbb R$ and $0 < a < \frac{\sigma^2}{4}$ (we call such a process a \textit{low-dimensional} CIR) and study the properties of $Y = \sqrt{X}$. More precisely, we represent $Y$ as a transformation of a reflected Brownian motion  $\widetilde W$ and use the properties of the local time $L^{\widetilde W}$ of the latter to study the local time $L^Y$ of $Y$. Afterwards, we use the connection between $L^{\widetilde W}$ and $L^Y$ to get a representation in the spirit of \eqref{eq: Y Bertoin intro}: namely, we prove that $Y$ is a strong solution of the equation
\begin{equation}\label{eq: eq for Y introduction}
    Y(t) = \sqrt{x_0} - \frac{b}{2}\int_0^t Y(s) ds + \frac{\sigma}{2} W(t) + L(t), 
\end{equation}
where
\begin{equation}\label{intro: L as local time}
    L(t) = -\frac{1}{2}\left(\frac{\sigma^2}{4} - a \right) \int_0^{\infty} y^{\frac{4a}{\sigma^2} - 2} \left( \ell(t,y) - \ell(t,0) \right) dy
\end{equation}
with $\ell$ being an explicitly given normalized transformation of a local time $L^Y$ of the process $Y$:
\[
\ell(t,y):=y^{1-\frac{4a}{\sigma^2}} L^Y(t,y),
\]
where $\ell(t,0):=\lim_{y\to0+}\ell(t,y)$ is defined by continuity.

Finally, we close the gap of \cite{MYT2022} mentioned above and obtain a representation of the Skorokhod reflection function for the ROU process in terms of CIR processes of dimension $k = \frac{4a}{\sigma^2} < 1$.

It is worth noting that our approach is simpler than the one in \cite{Bertoin_1990-1, Bertoin_1990b} and is based on the following machinery: we notice that It\^o's formula applied to $\sqrt{X(t) + \varepsilon}$ followed by moving $\varepsilon \downarrow 0$ implies that $Y = \sqrt{X}$ satisfies the equation of the form \eqref{eq: eq for Y introduction} with $L$ represented as an a.s.-limit
\begin{equation}\label{eq: L as limit intro}
    L(t) := \lim_{n\to\infty} \frac{1}{2} \int_0^t \left( \frac{a}{\sqrt{X(s) + \varepsilon_n}} - \frac{\sigma^2}{4} \frac{X(s)}{(X(s) + \varepsilon_n)^{\frac{3}{2}}} \right)ds
\end{equation}
and $\{\varepsilon_n,~n\ge 0\}$ being some sequence converging to zero. After that, we utilize the fact that the CIR process $X$ is a regular diffusion and hence can be obtained from some reflected Brownian motion $\widetilde W$ by a transformation of time and scale (see e.g. \cite[Chapter V, Section 7]{Rogers_Williams_1987}). We find the explicit shape of this transformation, use it to establish the connection between the local times of $\widetilde W$ and $Y$. Finally, we exploit this link to show that the limit \eqref{eq: L as limit intro} is equal to \eqref{intro: L as local time}. The technique described above seems to be more transparent than the one employed in \cite{Bertoin_1990-1, Bertoin_1990b} and additionally allows to get a clear intuition behind the process $\ell$ in \eqref{eq: l Bertoin intro}.

\subsection{Structure of the paper}

The paper is organized as follows. In Section \ref{sec: preliminaries}, we present some preliminary calculations and discuss the representation \eqref{eq: eq for Y introduction}--\eqref{eq: L as limit intro}. Section \ref{sec: hard case} is devoted to the case $0< a < \frac{\sigma^2}{4}$ and contains Theorem \ref{th: main theorem} that can be regarded as the main result of the paper. In Section \ref{sec: simple cases}, we discuss the results and compare them with the behavior of the limit in \eqref{eq: L as limit intro} when $a\ge \frac{\sigma^2}{4}$.  In Section~\ref{sec: connection to Skorokhod reflections}, we establish a new connection between CIR processes of dimension $k = \frac{4a}{\sigma^2} < 1$ and ROU processes and obtain a new representation of Skorokhod reflection function.

\section{Preliminary calculations}\label{sec: preliminaries}

Let $a,\sigma > 0$, $b\ge 0$, $W = \{W(t),~t\ge 0\}$ be a Brownian motion, and let us consider the continuous modification of a standard CIR process \eqref{0.q}. Note that, by \cite[Chapter IV, Example 8.2]{IW}, the paths of $X$ are non-negative with probability 1 provided that $a>0$ and hence one can define the square-root process $Y = \{Y(t),~t\ge 0\} := \{\sqrt{X(t)},~t\ge 0\}$. In order to analyze the dynamics of $Y$, take $\varepsilon > 0$ and observe that, by It\^o's formula,
\begin{equation}\label{eq: Ito}
\begin{aligned}
    \sqrt{X(t) + \varepsilon} &= \sqrt{x_0 + \varepsilon} + \frac{1}{2} \int_0^t \left( \frac{a}{\sqrt{X(s) + \varepsilon}} - \frac{\sigma^2}{4} \frac{X(s)}{(X(s) + \varepsilon)^{\frac{3}{2}}} \right)ds 
    \\
    &\quad - \frac{1}{2} \int_0^t \frac{b X(s)}{\sqrt{X(s) + \varepsilon}} ds + \frac{\sigma}{2} \int_0^t \frac{\sqrt{X(s)}}{\sqrt{X(s) + \varepsilon}} dW(s).
\end{aligned}
\end{equation}
Fix an arbitrary $T>0$ and note that the left-hand side of \eqref{eq: Ito} converges to $Y(t)$ uniformly on $[0,T]$ with probability 1 as $\varepsilon \downarrow 0$. It is also evident that 
\begin{equation}\label{eq: convergence CIR 2}
\begin{gathered}
    \sup_{t\in[0,T]}\left|\int_0^t \frac{X(s)}{\sqrt{X(s) + \varepsilon}} ds - \int_0^t Y(s) ds\right| \to 0 \quad a.s., \qquad \varepsilon \downarrow 0.
\end{gathered}
\end{equation}
Next, by \cite[Chapter XI]{Revuz_Yor_1999}, $\mathbb{E}\int_0^\infty  \mathbbm{1}_{\{X(s) = 0 \}} ds = 0$ and hence, by the Burkholder-Davis-Gundy inequality, for any $T>0$
\begin{equation*}
\begin{aligned}
    \mathbb E&\left(\sup_{t\in [0,T]}\left|\int_0^t \frac{\sqrt{X(s)}}{\sqrt{X(s) + \varepsilon}} dW(s) - W(t)\right|\right)^{2} 
    \\
    &\le 4\mathbb E \int_0^T \left(\frac{\sqrt{X(s)}}{\sqrt{X(s) + \varepsilon}} -1\right)^{2} ds
    \\
    &= 4\mathbb E\int_0^T \left(\frac{\sqrt{X(s)}}{\sqrt{X(s) + \varepsilon}} -1\right)^{2} \mathbbm{1}_{\{X(s) > 0 \}} ds + 4 \mathbb{E}\int_0^T  \mathbbm{1}_{\{X(s) = 0 \}} ds
    \\
    &= 4\mathbb E\int_0^T \left(\frac{\sqrt{X(s)}}{\sqrt{X(s) + \varepsilon}} -1\right)^{2} \mathbbm{1}_{\{X(s) > 0 \}} ds \to 0, \quad \varepsilon \downarrow 0.
\end{aligned}
\end{equation*}
This implies that for each $T>0$
\begin{equation}\label{eq: UCP to W}
    \sup_{t\in[0,T]}\left|\int_0^t \frac{\sqrt{X(s)}}{\sqrt{X(s) + \varepsilon_{n}}} dW(s) - W(t)\right| \xrightarrow{\mathbb P} 0, \quad \varepsilon \downarrow 0.
\end{equation}
In particular, \eqref{eq: Ito} as well as convergences \eqref{eq: convergence CIR 2} and \eqref{eq: UCP to W} imply that the left-hand side of
\begin{align*}
    \sqrt{X(t) + \varepsilon}& - \sqrt{x_0 + \varepsilon} + \frac{1}{2} \int_0^t \frac{b X(s)}{\sqrt{X(s) + \varepsilon}} ds - \frac{\sigma}{2} \int_0^t \frac{\sqrt{X(s)}}{\sqrt{X(s) + \varepsilon}} dW(s)
    \\
    & = \frac{1}{2} \int_0^t \left( \frac{a}{\sqrt{X(s) + \varepsilon}} - \frac{\sigma^2}{4} \frac{X(s)}{(X(s) + \varepsilon)^{\frac{3}{2}}} \right)ds 
\end{align*}
converges uniformly on compacts in probability as $\varepsilon \downarrow 0$. Therefore, there exists a ucp-limit
\begin{equation}\label{eq: def of L}
    L(t) := \lim_{\varepsilon \downarrow 0} \frac{1}{2} \int_0^t \left( \frac{a}{\sqrt{X(s) + \varepsilon}} - \frac{\sigma^2}{4} \frac{X(s)}{(X(s) + \varepsilon)^{\frac{3}{2}}} \right)ds
\end{equation}
and the process $Y = \sqrt{X}$ satisfies the SDE of the form
\[
    Y(t) = Y(0) - \frac{b}{2}\int_0^t Y(s) ds + \frac{\sigma}{2} W(t) + L(t),  
\]
where $Y(0) = \sqrt{x_0}$.

\begin{remark}\label{rem: anticipation of a.s. convergence}
    Ucp convergence in \eqref{eq: def of L} implies that for an arbitrary $T>0$ there exists a sequence $\{\varepsilon_{n},~n \ge 1\}$ (depending on $T$) such that, for any $t\in[0,T]$, 
    \begin{equation}\label{eq: a.s.}
    \begin{gathered}
        \left|L(t) - \frac{1}{2} \int_0^t \left( \frac{a}{\sqrt{X(s) + \varepsilon_n}} - \frac{\sigma^2}{4} \frac{X(s)}{(X(s) + \varepsilon_n)^{\frac{3}{2}}} \right)ds\right| \to 0
    \end{gathered}
    \end{equation}
    with probability~1 as $n\to \infty$. Later on, we will see that the a.s. convergence \eqref{eq: a.s.} holds for an \emph{arbitrary} sequence $\{\varepsilon_n,~n\ge 1\}$ such that $\varepsilon_n \downarrow 0$ as $n\to\infty$. Moreover, it will be shown that the set of full probability where \eqref{eq: a.s.} holds can be chosen independently of a particular sequence $\{\varepsilon_n,~n\ge 1\}$. 
\end{remark}

\begin{remark}
    Note that the process $L$ defined by \eqref{eq: def of L} is continuous a.s. since
    \[
        L(t) = Y(t) - Y(0) + \frac{b}{2}\int_0^t Y(s) ds - \frac{\sigma}{2} W(t).
    \]
\end{remark}

\section{Stochastic representation of $L$ when ${0 < a < \frac{\sigma^2}{4}}$}\label{sec: hard case}

Our strategy for the analysis of $L$ will be as follows. Since the CIR process $X$ in \eqref{0.q} is a non-negative regular diffusion, it can be represented (see e.g. \cite[Chapter V, Section 7]{Rogers_Williams_1987}) in the form
\[
    X(t) = S^{-1}\left(\widetilde W_{\tau_t}\right)
\]
for some change of time $\tau$ and change of scale $S$ of a reflected Brownian motion $\widetilde W = \{\widetilde W(t),~t\ge 0\}$. Then, we re-write the integral in the limit \eqref{eq: def of L} in terms of the local time $L^{\widetilde W}  = L^{\widetilde W}(t,x)$ of $\widetilde W$ and exploit H\"older continuity of the latter to find an explicit representation of $L$ in terms of $L^{\widetilde W}$.

\subsection{CIR process as the transformation of a reflected Brownian motion}

In order to implement our approach, we first need to represent the CIR process as a transformation of a reflected Brownian motion. For a given set of parameters $a$, $b$, $\sigma$ of the SDE \eqref{0.q}, define a \emph{scale function} $S$: $[0,\infty)\to[0,\infty)$ by
\begin{equation}\label{eq: S}
    S(x) := \int_0^x y^{-\frac{2a}{\sigma^2}} e^{\frac{2by}{\sigma^2}} dy
\end{equation}
and observe that, since $S$ is strictly increasing and $S(\infty) = \infty$, there exists its inverse $S^{-1}$. Define also a \emph{speed measure}
\[
    m(dx) = \rho(x)dx,
\]
where
\begin{equation}\label{eq: rho}
    \rho(x) := \frac{1}{\sigma^2} x^{\frac{4a}{\sigma^2}-1} e^{-\frac{4b}{\sigma^2} x} \mathbbm 1_{x>0}.
\end{equation}

\begin{proposition}\label{prop: CIR as RBM}
    Let $X$ be the unique strong solution to the CIR equation \eqref{0.q} with $0< a < \frac{\sigma^2}{4}$. Then there exists a reflected Brownian motion $\widetilde W$ starting at $S(x_0)$ such that 
    \begin{equation}\label{eq: X as time-changed Brownian motion}
        X(t) := S^{-1}\left( \widetilde W(\tau_t) \right),      
    \end{equation}
    where
    \begin{equation}\label{eq: tau}
        \tau_t := \varphi^{-1}_t
    \end{equation}
    with
    \begin{equation}\label{eq: phi}
        \varphi_t := \int_0^t \rho\left(S^{-1}\left(\widetilde W(s)\right)\right)ds.
    \end{equation}
\end{proposition}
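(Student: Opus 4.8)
The plan is to put $X$ into natural scale and then recognize the resulting process as a time-changed reflected Brownian motion, following the general representation theory for one-dimensional regular diffusions (\cite[Chapter V, Section 7]{Rogers_Williams_1987}); the real content is matching the concrete scale function \eqref{eq: S} and speed density \eqref{eq: rho} to the CIR generator. First I would record the analytic properties of $S$: since $0<a<\frac{\sigma^2}{4}$ gives $\frac{2a}{\sigma^2}\in(0,\tfrac12)$, the integrand $y^{-2a/\sigma^2}$ is integrable at the origin, so $S$ is finite, continuous and strictly increasing on $[0,\infty)$ with $S(0)=0$, while the assumption $b\ge 0$ forces $S(\infty)=\infty$, making $S^{-1}\colon[0,\infty)\to[0,\infty)$ well defined. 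Differentiating gives $S'(x)=x^{-2a/\sigma^2}e^{2bx/\sigma^2}$ with $S'(0+)=+\infty$, and a short computation verifies that $S$ solves $\mathcal{L}S=0$ on $(0,\infty)$, where $\mathcal{L}f(x)=(a-bx)f'(x)+\tfrac12\sigma^2 x f''(x)$ is the CIR generator; this is precisely the statement that $S$ is a scale function, so $S(X)$ is in natural scale.

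Next I would apply It\^o's formula to $Z:=S(X)$. On any excursion of $X$ away from $0$ the two drift terms cancel by $\mathcal{L}S=0$, leaving $dZ_t=S'(X_t)\sigma\sqrt{X_t}\,dW_t$, a continuous local martingale with
\[
d\langle Z\rangle_t=(S'(X_t))^2\sigma^2 X_t\,dt.
\]
The decisive algebraic identity is $(S'(x))^2\sigma^2 x=\sigma^2 x^{1-4a/\sigma^2}e^{4bx/\sigma^2}=1/\rho(x)$, so that the clock of $Z$ is governed by the speed density,
\[
\langle Z\rangle_t=\int_0^t\frac{ds}{\rho(X_s)}=\int_0^t\frac{ds}{\rho(S^{-1}(Z_s))}.
\]
Globally I would argue that, because the dimension $\frac{4a}{\sigma^2}<1<2$, the endpoint $0$ is reached and instantaneously reflecting while $X$ spends zero Lebesgue time there; consequently $Z$ is a nonnegative continuous process with semimartingale decomposition $Z=N+K$, where $N$ is a continuous local martingale with $\langle N\rangle=\langle Z\rangle$ and $K$ is continuous, nondecreasing, and increases only on $\{Z=0\}$. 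The hard part will be exactly this step: the interior ($C^2$) It\^o computation above sees no finite-variation part, yet a reflecting term must appear, precisely because $S'(0+)=+\infty$ renders the classical formula invalid at the boundary. Making the increasing term $K$ rigorous (via either the general natural-scale/reflecting-boundary theory or an It\^o--Tanaka argument against the second-derivative measure $S''(dx)$, exploiting that $S$ magnifies the neighborhood of the reflecting endpoint) is the delicate point.

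With the decomposition in hand the representation follows by a time change. I would set $\tau_t:=\langle Z\rangle_t=\int_0^t\frac{ds}{\rho(X_s)}$; since the integrand is strictly positive off the zero set of $X$ and $X$ spends no time at $0$, $\tau$ is continuous and strictly increasing, and $\tau_\infty=\infty$ (the boundary $\infty$ is natural and $X$ is non-explosive), so $\varphi:=\tau^{-1}$ is a continuous strictly increasing time change. Defining $\widetilde{W}(u):=Z(\varphi_u)=S(X(\varphi_u))$, the process $\widetilde{B}(u):=N(\varphi_u)$ is a continuous local martingale with $\langle\widetilde{B}\rangle_u=\langle N\rangle_{\varphi_u}=\tau_{\varphi_u}=u$, hence a standard Brownian motion by L\'evy's characterization, while $\ell(u):=K(\varphi_u)$ is continuous, nondecreasing and supported on $\{\widetilde{W}=0\}$. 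Thus $\widetilde{W}=\widetilde{B}+\ell\ge 0$, and by uniqueness in the Skorokhod problem $(\widetilde{W},\ell)$ is the Skorokhod reflection of $\widetilde{B}$, so that $\widetilde{W}$ is a reflected Brownian motion started at $\widetilde{W}(0)=Z(0)=S(x_0)$.

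Finally I would close the loop by verifying that the clock agrees with its self-referential definition \eqref{eq: phi}--\eqref{eq: tau}. Substituting $s=\tau_r$ in $\int_0^{\tau_t}\rho\big(S^{-1}(\widetilde{W}_s)\big)\,ds$, and using $ds=\tau_r'\,dr=\rho(X_r)^{-1}dr$ together with $\widetilde{W}_{\tau_r}=Z(\varphi_{\tau_r})=Z(r)$, the integrand $\rho(X_r)$ cancels against $ds$ and the integral equals $\int_0^t dr=t$; this shows $\int_0^{\tau_t}\rho(S^{-1}(\widetilde{W}_s))\,ds=t$, i.e. the $\varphi$ defined by \eqref{eq: phi} is indeed $\tau^{-1}$. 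The desired identity \eqref{eq: X as time-changed Brownian motion} then follows immediately, since $\widetilde{W}(\tau_t)=Z(\varphi_{\tau_t})=Z(t)=S(X(t))$ gives $X(t)=S^{-1}(\widetilde{W}(\tau_t))$.
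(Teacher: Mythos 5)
Your forward strategy (put $X$ in natural scale, decompose, time-change, apply L\'evy and Skorokhod) could in principle be completed, but as written there is a genuine gap, and it sits exactly at the step you flag as ``delicate'': the assertion that $Z:=S(X)$ is a semimartingale with decomposition $Z=N+K$, where $N$ is a continuous local martingale and $K$ is continuous, nondecreasing and carried by $\{Z=0\}$. The facts you cite (the boundary $0$ is regular and instantaneously reflecting, and $X$ spends zero Lebesgue time at $0$) do \emph{not} imply this; whether a given transformation of a low-dimensional CIR process is a semimartingale is precisely the subtle point here --- recall that $\sqrt{X}$, which also vanishes exactly on $\{X=0\}$, is \emph{not} a semimartingale when $0<a<\frac{\sigma^2}{4}$, as the paper itself emphasizes. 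Moreover, neither of the tools you propose can close the gap. It\^o--Tanaka is inapplicable: a difference of convex functions has finite one-sided derivatives at every interior point of its domain, while any extension of $S$ past $0$ has $S'(0+)=+\infty$; heuristically, the second-derivative measure would need an atom of infinite mass at $0$, multiplied by the semimartingale local time of $X$ at $0$, which vanishes because $d\langle X\rangle_t=\sigma^2X_t\,dt$ degenerates at the boundary --- an indeterminate $\infty\cdot 0$. And no ``interior'' computation can produce $K$: since $K$ would be carried by the Lebesgue-null set $\{t:X_t=0\}$, it is invisible to occupation-density arguments on $(0,\infty)$; indeed, testing the drift against the occupation density and using $\mathcal{L}S=0$ would ``show'' $K\equiv 0$, which is false, since it would make $Z$ a nonnegative continuous local martingale, hence an a.s. convergent supermartingale, contradicting the fact that $X$ returns to $0$ and to fixed positive levels indefinitely. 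So the word ``consequently'' in your argument conceals essentially the whole content of the proposition.

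The paper's proof avoids this trap by arguing in the opposite direction. In Step 1 it starts from a reflected Brownian motion $\widetilde W$ and applies the extended It\^o formula \cite[Lemma IV.45.9]{Rogers_Williams_1987} to $V=S^{-1}(\widetilde W)$; this direction is harmless because $(S^{-1})'(w)=\left(S^{-1}(w)\right)^{2a/\sigma^2}e^{-2bS^{-1}(w)/\sigma^2}$ \emph{vanishes} at $w=0$, so the local-time part of $d\widetilde W$ is annihilated by the integrand rather than having to be created out of nothing, and the time-changed process $S^{-1}\left(\widetilde W(\tau_\cdot)\right)$ is identified as a weak solution of \eqref{0.q} via the martingale problem. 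Step 2 then combines uniqueness for \eqref{0.q} with the invertibility of the transformation (Remark \ref{rem: inverse transformation}) to manufacture, from the given strong solution $X$, the reflected Brownian motion $\widetilde W(t)=S(X(\varphi_t))$ realizing \eqref{eq: X as time-changed Brownian motion}. If you want to keep your forward route, you must replace ``consequently'' by a proof that the SDE solution $X$ coincides in law with the regular diffusion with scale \eqref{eq: S}, speed density \eqref{eq: rho} and instantaneously reflecting boundary at $0$, and then invoke the representation theorem for such diffusions; in substance that is exactly the paper's two-step argument, so the direction of the construction is not a cosmetic choice but the crux of the proof.
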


Before moving to the proof of Proposition \ref{prop: CIR as RBM}, let us make some remarks regarding its formulation.

\begin{remark}\hfill
    \begin{enumerate}
        \item The process $\varphi$ in \eqref{eq: phi} is well-defined. Indeed, let $L^{\widetilde W} = \{L^{\widetilde W}(t,x),~t\ge 0,~x\ge 0\}$ be the local time of $\widetilde W$, i.e. for any bounded measurable $f$,
        \[
            \int_0^t f(\widetilde W(s))ds = \int_0^\infty f(x)L^{\widetilde W}(t,x)dx \quad a.s.
        \]
        Then, with probability 1,
        \begin{align*}
            \varphi_t &= \int_0^t \rho\left(S^{-1}\left(\widetilde W(s)\right)\right)ds
            \\
            & = \int_0^\infty \rho\left(S^{-1}\left(y\right)\right)L^{\widetilde W}(t,y)dy
            \\
            & = \int_0^\infty \rho\left(x\right)S'(x) L^{\widetilde W}(t,S(x))dx
            \\
            & = \frac{1}{\sigma^2}  \int_0^\infty x^{\frac{2a}{\sigma^2}-1} e^{-\frac{2b}{\sigma^2} x}  L^{\widetilde W}(t,S(x))dx
            \\
            & < \infty
        \end{align*}
        because $\frac{2a}{\sigma^2}-1 > -1$ and $L^{\widetilde W}(t,S(x)) = 0$ for $x > S^{-1} \left(\max_{s\in[0,T]} \widetilde W(s)\right)$.

        \item Since $\varphi$ is strictly increasing and $\varphi_\infty = \infty$ with probability 1, its inverse $\tau$ in \eqref{eq: tau} is well-defined a.s.
    \end{enumerate}
\end{remark}

\begin{remark}\label{rem: inverse transformation}
    The transformation \eqref{eq: X as time-changed Brownian motion} is invertible. Indeed, it is straightforward to check that, with probability 1,
    \[
        \tau_t = \int_0^t \frac{1}{\rho\left(X(s)\right)}ds,
    \]
    therefore,
    \begin{equation*}
        \widetilde W(t) = S\left( X\left( \varphi_t \right) \right),
    \end{equation*}
    where $\varphi = \tau^{-1}$ can be expressed as the inverse of the mapping $t \mapsto \int_0^t \frac{1}{\rho\left(X(s)\right)}ds$. For more details on transformations of this type, we refer the reader to \cite[Chapter IV, \S7]{IW}.
\end{remark}

\begin{proof}[Proof of Proposition \ref{prop: CIR as RBM}]
    We will split the proof into two steps. First, we will follow \cite[Chapter V, Section 7, \S 48]{Rogers_Williams_1987} to prove that, for \emph{some given reflected Brownian motion $\widetilde W$}, the process $X(t) := S^{-1}\left( \widetilde W(\tau_t) \right)$ is the weak solution to the SDE \eqref{0.q}. Then we will utilize the invertability of transformation \eqref{eq: X as time-changed Brownian motion} outlined in Remark \ref{rem: inverse transformation} to establish the existence of a reflected Brownian motion $\widetilde W$ together with the required representation for the given CIR process $X$.

    \textbf{Step 1.} Let $\widetilde Z = \{\widetilde Z(t),~t\ge 0\}$ be a standard Brownian motion starting at $\widetilde Z(0) = S(x_0)$. Consider a reflected Brownian motion
    \[
        \widetilde W(t) := |\widetilde Z(t)| = S(x_0) + Z(t) + L^{\widetilde Z}(t),
    \]
    where $Z(t) := \int_0^t \sign{\widetilde Z(s)} d \widetilde Z(s)$ is a Brownian motion and $L^{\widetilde Z}(t)$ is the local time of $L^{\widetilde Z}$ at zero. Put $V(t) := S^{-1}\left(\widetilde W(t)\right)$ and observe that, by the extension of It\^o's formula in \cite[Lemma IV.45.9]{Rogers_Williams_1987}, 
    \begin{align*}
        V(t) &- V(0) 
        \\
        &= \int_0^t \left(S^{-1}\left(\widetilde W(u)\right)\right)^{\frac{2a}{\sigma^2}} e^{- \frac{2bS^{-1}\left(\widetilde W(u)\right)}{\sigma^2}} d\widetilde W(u)
        \\
        &\quad + \int_0^t \frac{1}{\sigma^2} \left(S^{-1}\left(\widetilde W(u)\right)\right)^{\frac{4a}{\sigma^2} - 1} e^{-\frac{4b}{\sigma^2}S^{-1}\left(\widetilde W(u)\right)} \left(a-b S^{-1}\left(\widetilde W(u)\right)\right)du
        \\
        & = \int_0^t \left(V(u)\right)^{\frac{2a}{\sigma^2}} e^{-\frac{2b}{\sigma^2}V(u)}dZ(u) 
        \\
        &\quad + \int_0^t \frac{1}{\sigma^2} \left(V(u)\right)^{\frac{4a}{\sigma^2}-1} e^{-\frac{4b}{\sigma^2}V(u)} \left(a - b V(u)\right)du.
    \end{align*}
    Hence, by It\^o's formula, for any infinitely differentiable function with compact support $h$,
    \begin{equation}\label{proofeq: C is a local martingale}
        C_t(h) := h(V(t)) - h(V(0)) - \int_0^t \frac{1}{\sigma^2} \left(V(s)\right)^{\frac{4a}{\sigma^2} - 1} e^{-\frac{4b}{\sigma^2} V(s)} \mathcal A h(V(s)) ds
    \end{equation}
    is a local martingale, where
    \[
        \mathcal Ah(x) := (a-bx) h'(x) + \frac{\sigma^2x}{2} h''(x)
    \]
    is the generator of \eqref{0.q}. Recall that 
    \[
        X(t) = V(\tau_t)
    \]
    and observe that, by \eqref{proofeq: C is a local martingale}, for any infinitely differentiable function with compact support $h$, simple change of variables yields that
    \begin{align*}
        C_{\tau_t}(h) & = h(V(\tau_t)) - h(V(0)) - \int_0^{\tau_t} \frac{1}{\sigma^2} \left(V(s)\right)^{\frac{4a}{\sigma^2} - 1} e^{-\frac{4b}{\sigma^2} V(s)} \mathcal A h(V(s)) ds 
        \\
        & = h(X(t)) - h(x_0) - \int_0^t \mathcal A h(X(s)) ds.
    \end{align*}
    Since $C_{\tau_t}(h)$, $t\ge 0$, is a local martingale by the optional stopping theorem,
    \[
        h(X(t)) - h(x_0) - \int_0^t \mathcal A h(X(s)) ds
    \]
    is also a local martingale and therefore, by \cite[V.19--V.20]{Rogers_Williams_1987}, $X$ is the weak solution to \eqref{0.q}.

    \textbf{Step 2.} Let now $X$ be the unique strong solution to \eqref{0.q}. By Remark \ref{rem: inverse transformation} and Step 1, the process
    \[
        \widetilde W(t) := S(X(\varphi_t)),
    \]
    where $\varphi$ is defined as the inverse of the mapping $t \mapsto \int_0^t \frac{1}{\rho\left(X(s)\right)}ds$, is a reflected Brownian motion for which $X$ admits the representation \eqref{eq: X as time-changed Brownian motion}. 
    
\end{proof}

\subsection{Characterization of $L$ in terms of $L^{\widetilde W}$}

Having the representation \eqref{eq: X as time-changed Brownian motion} at our disposal, we are now ready to characterize the process $L$ from \eqref{eq: def of L} in terms of the local time $L^{\widetilde W}$ of the corresponding reflected Brownian motion. 

Let $X$ be the unique strong solution to the SDE \eqref{0.q} with $0< a < \frac{\sigma^2}{4}$ and $\widetilde W$ be the reflected Brownian motion such that
\[
     X(t) := S^{-1}\left( \widetilde W(\tau_t) \right).
\]
Denote $L^{\widetilde W} = \{L^{\widetilde W}(t,x),~t\ge 0,~x\ge 0\}$ the jointly continuous modification of the local time of $\widetilde W$ so that for any bounded measurable $f$,
\[
    \int_0^t f(\widetilde W(s))ds = \int_0^\infty f(x)L^{\widetilde W}(t,x)dx, \quad t\ge 0,
\]
with probability 1.

First of all, let us express the local time $L^Y = L^Y(t,y)$ of the process $Y = \sqrt{X}$ in terms of $L^{\widetilde W}$. 
\begin{proposition}
    Let
    \[
        Y(t) := \sqrt{X(t)} = \sqrt{S^{-1}\left( \widetilde W(\tau_t) \right)}
    \]
    be the square root of the CIR process $X$. Then, for any bounded measurable $f$,
    \[
        \int_0^t f(Y(s))ds = \int_0^\infty f(y) L^Y(t,y)dy,
    \]
    where, with probability 1,
    \begin{equation}\label{eq: LY represented as transf of LW}
        L^Y(t,y) =  \frac{2}{\sigma^2}  y^{\frac{4a}{\sigma^2} - 1} e^{-\frac{2b}{\sigma^2}y^2}  L^{\widetilde W}(\varphi_t, S(y^2))
    \end{equation}
    with $\varphi$ being defined by \eqref{eq: phi}.
\end{proposition}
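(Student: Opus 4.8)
The plan is to establish the occupation-time formula for $Y$ by expressing everything in terms of the already-known local time $L^{\widetilde W}$ of the reflected Brownian motion, proceeding through a sequence of changes of variables. I would start from the defining occupation-time identity for $L^Y$, namely $\int_0^t f(Y(s))ds = \int_0^\infty f(y)L^Y(t,y)dy$, and try to compute the left-hand side directly using the representation $Y(s) = \sqrt{S^{-1}(\widetilde W(\tau_s))}$. The natural first step is to undo the time change: substituting $s = \tau_u$ (equivalently $u = \varphi_s$) and recalling from Proposition~\ref{prop: CIR as RBM} and Remark~\ref{rem: inverse transformation} that $d\tau_u = \frac{1}{\rho(X(\tau_u))}du = \frac{1}{\rho(S^{-1}(\widetilde W(u)))}du$, I would rewrite
\[
    \int_0^t f\bigl(\sqrt{S^{-1}(\widetilde W(\tau_s))}\bigr)ds = \int_0^{\varphi_t} f\bigl(\sqrt{S^{-1}(\widetilde W(u))}\bigr)\frac{1}{\rho\bigl(S^{-1}(\widetilde W(u))\bigr)}du.
\]

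Next I would apply the occupation-time formula for $\widetilde W$ itself to the right-hand integral, treating $g(w) := f(\sqrt{S^{-1}(w)})/\rho(S^{-1}(w))$ as the (bounded measurable, after localizing to the compact range of $\widetilde W$ on $[0,\varphi_t]$) test function, which gives
\[
    \int_0^{\varphi_t} g(\widetilde W(u))du = \int_0^\infty f\bigl(\sqrt{S^{-1}(w)}\bigr)\frac{1}{\rho\bigl(S^{-1}(w)\bigr)} L^{\widetilde W}(\varphi_t, w)\,dw.
\]
The remaining work is purely a change of variables to bring this into the form $\int_0^\infty f(y)(\cdots)dy$ so that the factor multiplying $f(y)$ can be read off as $L^Y(t,y)$. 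I would substitute $w = S(y^2)$ (so that $\sqrt{S^{-1}(w)} = y$), with $dw = S'(y^2)\cdot 2y\,dy$, and then insert the explicit forms $S'(x) = x^{-2a/\sigma^2}e^{2bx/\sigma^2}$ from \eqref{eq: S} and $\rho(x) = \frac{1}{\sigma^2}x^{4a/\sigma^2 - 1}e^{-4bx/\sigma^2}$ from \eqref{eq: rho}, evaluated at $x = y^2$. Collecting the powers of $y$ and the exponentials should produce exactly the stated prefactor $\frac{2}{\sigma^2}y^{4a/\sigma^2 - 1}e^{-2by^2/\sigma^2}$.

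The main obstacle, and the step requiring the most care, is the justification of applying the occupation formula for $\widetilde W$ with the unbounded test function $g$, since $1/\rho(S^{-1}(w))$ blows up as $w\to 0$ (that is, as $x\to 0$, because $\rho(x)\sim x^{4a/\sigma^2-1}\to\infty$ there, so $1/\rho\to 0$ actually) and more relevantly one must ensure all integrals are finite and the substitutions are valid $\omega$-by-$\omega$. I would handle this exactly as in the remark following Proposition~\ref{prop: CIR as RBM}: the support of $w\mapsto L^{\widetilde W}(\varphi_t,w)$ is the compact interval $[0,\max_{s\le\varphi_t}\widetilde W(s)]$, so only the behavior of $g$ on a bounded $w$-interval matters, and the integrability near $w=0$ follows from the same exponent condition $\frac{2a}{\sigma^2}-1>-1$ used there to show $\varphi_t<\infty$. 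Once finiteness is secured, the identification of $L^Y(t,y)$ with the stated expression is immediate from the uniqueness of the density in the occupation-time formula (valid for a.e. $y$, and upgraded to all $y$ on a set of full probability by the joint continuity of $L^{\widetilde W}$ and continuity of the explicit prefactor). The only genuinely delicate point beyond bookkeeping is confirming that the a.s. exceptional sets from the various identities can be chosen uniformly, which again rides on the joint continuity of $L^{\widetilde W}$.
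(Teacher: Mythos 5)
Your overall route is the same as the paper's: undo the time change, apply the occupation-time formula for $\widetilde W$, then substitute $w = S(y^2)$ and read off the density. However, the crucial step --- the time-change substitution --- is inverted in your write-up, and the proof does not survive this. Since $Y(s) = \sqrt{S^{-1}(\widetilde W(\tau_s))}$, the substitution that turns $\widetilde W(\tau_s)$ into $\widetilde W(u)$ is $u = \tau_s$, equivalently $s = \varphi_u$, whose Jacobian is
\begin{equation*}
    ds = \varphi_u'\,du = \rho\left(S^{-1}\left(\widetilde W(u)\right)\right)du
\end{equation*}
by the definition \eqref{eq: phi} of $\varphi$. Hence the factor multiplying the occupation density of $\widetilde W$ is $\rho(S^{-1}(\cdot))$, not $1/\rho(S^{-1}(\cdot))$. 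What you wrote, ``substituting $s=\tau_u$ (equivalently $u=\varphi_s$)'', is the inverse map: under it the integrand $\widetilde W(\tau_s)$ becomes $\widetilde W(\tau_{\tau_u})$, which does not simplify at all, and the auxiliary identity $d\tau_u = \frac{1}{\rho(X(\tau_u))}du = \frac{1}{\rho(S^{-1}(\widetilde W(u)))}du$ is false: by Remark \ref{rem: inverse transformation}, $\tau_u' = 1/\rho(X(u))$, and $X(u) = S^{-1}(\widetilde W(\tau_u))$, so this derivative equals $1/\rho(S^{-1}(\widetilde W(\tau_u)))$, not $1/\rho(S^{-1}(\widetilde W(u)))$. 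Your displayed formula therefore combines the integrand produced by one substitution with the Jacobian (and upper limit) produced by the other.

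This is not a cosmetic slip, because $\rho$ versus $1/\rho$ changes the answer. Pushing your integrand through the final change of variables $w = S(y^2)$, $dw = 2yS'(y^2)\,dy$, yields the density
\begin{equation*}
    \frac{2y\,S'(y^2)}{\rho(y^2)} = 2\sigma^2\, y^{3-\frac{12a}{\sigma^2}}\, e^{\frac{6b}{\sigma^2}y^2},
\end{equation*}
whereas the stated prefactor is $2y\,\rho(y^2)\,S'(y^2) = \frac{2}{\sigma^2}y^{\frac{4a}{\sigma^2}-1}e^{-\frac{2b}{\sigma^2}y^2}$. So the assertion that collecting the powers of $y$ ``should produce exactly the stated prefactor'' is precisely what fails; actually carrying out the bookkeeping would have exposed the error. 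Indeed, your own parenthetical observation that $1/\rho(S^{-1}(w))\to 0$ as $w\to 0$ was the red flag: the correct weight $\rho(S^{-1}(w))$ genuinely blows up at $0$ (since $\frac{4a}{\sigma^2}-1<0$), and it is exactly this singularity that the integrability argument you invoke (the remark after Proposition \ref{prop: CIR as RBM}, relying on $\frac{2a}{\sigma^2}-1>-1$) is needed to control. A final, minor point: with $\tau=\varphi^{-1}$ as defined in Proposition \ref{prop: CIR as RBM}, the correct substitution produces the upper limit $\tau_t$, so the time argument of $L^{\widetilde W}$ should be $\tau_t$; the symbol $\varphi_t$ appearing in the statement (which your computation also produces, but only as an artifact of the inverted substitution) is a swap of labels inherited from the paper and is immaterial for the $y$-dependence, which is the substance of the proposition.
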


\begin{proof}
    For any bounded measurable $f$, we can write 
    \begin{align*}
        \int_0^t f(Y(s))ds &= \int_0^t f\left( \sqrt{S^{-1}\left(\widetilde W(\tau_u)\right)} \right)du
        \\
        & = \int_0^{\varphi_t} f\left( \sqrt{S^{-1}\left( \widetilde W(z) \right)} \right) \rho\left(S^{-1}\left(\widetilde W(z)\right)\right)dz
        \\
        & = \int_0^{\infty} f\left( \sqrt{S^{-1}\left( x \right)} \right) \rho\left(S^{-1}\left(x\right)\right) L^{\widetilde W}\left(\varphi_t, x\right)dx
        \\
        & = \int_0^{\infty} f\left( y \right) \rho\left(y^2\right) L^{\widetilde W}\left(\varphi_t, S(y^2)\right) 2yS'(y^2) dy
        \\
        & =: \int_0^\infty f(y) L^Y(t,y)dy.
    \end{align*}
    The final result is obtained by recalling that
    \begin{equation*}
    \begin{aligned}
        L^Y(t,y) &=  \rho\left(y^2\right) L^{\widetilde W}\left(\varphi_t, S(y^2)\right) 2yS'(y^2)
        \\
        & = \frac{2}{\sigma^2} y^{\frac{4a}{\sigma^2} - 1} e^{-\frac{2b}{\sigma^2}y^2} L^{\widetilde W} (\varphi_t, S(y^2)).
    \end{aligned}    
    \end{equation*}
\end{proof}
Define a normalized local time of the process $Y$ as follows. Set
    \begin{equation}\label{eq: definition of ell}
        \ell(t,y) := y^{1-\frac{4a}{\sigma^2}} L^Y(t,y), \quad y>0,
    \end{equation}
   and 
\[\ell(t,0):=\lim_{y\to0+}\ell(t,y).\]
Note that $\ell(t,y)$ is continuous in $(t,y)$ because 
$$\ell(t,y)= \frac{2}{\sigma^2} e^{-\frac{2b}{\sigma^2}y^2}  L^{\widetilde W}(\varphi_t, S(y^2)).$$ However, we want to stress that $\ell(t,y)$ is a function of the local time $L^Y$ of the process $Y$ without mentioning the auxiliary Brownian motion $\widetilde W$.

\begin{theorem}\label{th: main theorem}
    Let $X$ be the CIR process satisfying \eqref{0.q} and $\widetilde W$ be the reflected Brownian motion such that $X(t) = S^{-1}(\widetilde W(\tau_t))$, $t\ge 0$. Then, with probability 1, the process $Y = \sqrt{X}$ satisfies the SDE of the form
    \begin{equation}\label{eq: dynamics of Y in main theorem}
        Y(t) = \sqrt{x_0} - \frac{b}{2}\int_0^t Y(s) ds + \frac{\sigma}{2} W(t) + L(t),
    \end{equation}
    where
    
    \begin{equation}\label{eq: L as limit and local time}
    \begin{aligned}
        L(t) = -\frac{1}{2}\left(\frac{\sigma^2}{4} - a \right) \int_0^{\infty} y^{\frac{4a}{\sigma^2} - 2} \left( \ell(t,y) - \ell(t,0) 
        \right) dy.
    \end{aligned}
    \end{equation}
    Moreover, 
    \[
   \begin{aligned}
        L(t) &:= \lim_{\varepsilon \downarrow 0} \frac{1}{2} \int_0^t \left( \frac{a}{\sqrt{X(s) + \varepsilon}} - \frac{\sigma^2}{4} \frac{X(s)}{(X(s) + \varepsilon)^{\frac{3}{2}}} \right)ds
        \\
        & = - \lim_{\varepsilon \downarrow 0} \frac{1}{2} \int_0^t \left( \frac{\frac{\sigma^2}{4} - a}{\sqrt{X(s)+\varepsilon}} - \frac{\sigma^2}{4} \frac{\varepsilon}{(X(s) + \varepsilon)^{\frac{3}{2}}} \right)ds.
    \end{aligned}
    \]
        
\end{theorem}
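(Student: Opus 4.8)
The two displayed limit representations of $L$ are the natural backbone of the argument. The first of them, together with the SDE \eqref{eq: dynamics of Y in main theorem}, is nothing but the definition \eqref{eq: def of L} and the discussion of Section~\ref{sec: preliminaries}, so I would take it as given. Passing to the second representation is purely algebraic: writing $X=(X+\varepsilon)-\varepsilon$ in the numerator gives
\[
    \frac{a}{\sqrt{X+\varepsilon}} - \frac{\sigma^2}{4}\,\frac{X}{(X+\varepsilon)^{3/2}}
    = -\left( \frac{\frac{\sigma^2}{4}-a}{\sqrt{X+\varepsilon}} - \frac{\sigma^2}{4}\,\frac{\varepsilon}{(X+\varepsilon)^{3/2}} \right),
\]
which matches the two limits termwise. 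The genuine work is to identify this second limit with the local-time integral \eqref{eq: L as limit and local time}.

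For this, set $k:=\frac{4a}{\sigma^2}\in(0,1)$. Since $X=Y^2$, I would apply the occupation times formula for the local time $L^Y$ and then substitute $L^Y(t,y)=y^{k-1}\ell(t,y)$ from \eqref{eq: definition of ell}; the integral inside the second limit becomes $\int_0^\infty K_\varepsilon(y)\,\ell(t,y)\,dy$, where
\[
    K_\varepsilon(y) := \left(\tfrac{\sigma^2}{4}-a\right)\frac{y^{k-1}}{\sqrt{y^2+\varepsilon}} - \frac{\sigma^2}{4}\,\frac{\varepsilon\, y^{k-1}}{(y^2+\varepsilon)^{3/2}}.
\]
The key observation is that $K_\varepsilon$ is an exact derivative, $K_\varepsilon(y)=-\frac{\sigma^2}{4}\frac{d}{dy}\big[y^{k}(y^2+\varepsilon)^{-1/2}\big]$, whose primitive vanishes at both endpoints because $0<k<1$; hence $\int_0^\infty K_\varepsilon(y)\,dy=0$. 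As $\ell(t,\cdot)$ has compact support, this lets me subtract the constant $\ell(t,0)$ at no cost and rewrite the quantity as $\int_0^\infty K_\varepsilon(y)\big(\ell(t,y)-\ell(t,0)\big)\,dy$.

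It then remains to let $\varepsilon\downarrow 0$ under the integral sign. Pointwise $K_\varepsilon(y)\to(\frac{\sigma^2}{4}-a)y^{k-2}$, while the elementary bounds $(y^2+\varepsilon)^{-1/2}\le y^{-1}$ and $\varepsilon(y^2+\varepsilon)^{-3/2}\le y^{-1}$ yield $|K_\varepsilon(y)|\le C\,y^{k-2}$ uniformly in $\varepsilon$. Dominated convergence therefore reduces everything to the integrability of $y^{k-2}\big|\ell(t,y)-\ell(t,0)\big|$ on $(0,\infty)$: at infinity this is immediate from compact support and $k-2<-1$, and near $0$ I would exploit the identity $\ell(t,y)=\frac{2}{\sigma^2}e^{-\frac{2b}{\sigma^2}y^2}L^{\widetilde W}(\varphi_t,S(y^2))$, the local Hölder continuity of the Brownian local time $x\mapsto L^{\widetilde W}(\varphi_t,x)$ of every order $\gamma<\tfrac12$, and the scale asymptotics $S(y^2)\le Cy^{2-k}$ to get $|\ell(t,y)-\ell(t,0)|\le Cy^{(2-k)\gamma}$. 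Passing to the limit then produces $(\frac{\sigma^2}{4}-a)\int_0^\infty y^{k-2}(\ell(t,y)-\ell(t,0))\,dy$, which is exactly \eqref{eq: L as limit and local time}.

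The main obstacle is precisely this last integrability check at the origin: it is the only point where the low-dimensional restriction $0<a<\frac{\sigma^2}{4}$ (that is, $k<1$) and the sharp $1/2^-$ spatial regularity of reflected Brownian local time are genuinely needed, and one must verify that the Hölder exponent transported through $S$ strictly beats the singularity $y^{k-2}$, i.e. that $\frac{1-k}{2-k}<\frac12$ so that an admissible $\gamma$ exists. A lighter, bookkeeping point is that the uniform domination makes the convergence hold almost surely along every sequence $\varepsilon_n\downarrow 0$ on one common event, thereby upgrading the ucp limit of Section~\ref{sec: preliminaries} as anticipated in Remark~\ref{rem: anticipation of a.s. convergence}.
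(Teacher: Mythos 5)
Your proposal is correct, and its skeleton coincides with the paper's proof: pass to the occupation-time form via $L^Y(t,y)=y^{k-1}\ell(t,y)$, kill the $\ell(t,0)$ contribution by the vanishing of the kernel integral, and identify the limit by dominated convergence, with everything resting on the key integrability $\int_0^\infty y^{k-2}\left|\ell(t,y)-\ell(t,0)\right|dy<\infty$ obtained from the H\"older continuity of $L^{\widetilde W}$ transported through the scale function $S$ (your admissibility condition $\frac{1-k}{2-k}<\frac12$, which holds precisely because $k>0$, is exactly the paper's choice $\delta\in\left(0,\frac{k}{2(2-k)}\right)$ in Remark \ref{rem: integral in L is good}). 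There are, however, two organizational differences, both in your favor. First, the paper proves the cancellation of the $\ell(t,0)$ term (its $I_3$) by an integration-by-parts identity between the two kernel integrals, whereas you observe directly that the combined kernel is the exact derivative $-\frac{\sigma^2}{4}\frac{d}{dy}\left[y^k(y^2+\varepsilon)^{-1/2}\right]$ with vanishing boundary terms since $0<k<1$ --- the same fact, stated more transparently. Second, and more substantively, the paper treats the term $\int_0^\infty \frac{\varepsilon y^{k-1}}{(y^2+\varepsilon)^{3/2}}\left(\ell(t,y)-\ell(t,0)\right)dy$ (its $I_2$) by a separate explicit estimate: splitting at $y=1$, invoking the H\"older bound, and substituting $z=y/\sqrt{\varepsilon}$ to obtain a rate of order $\varepsilon^{\frac{k}{4}-\frac{\delta'}{2}}$. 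Your elementary bound $\varepsilon(y^2+\varepsilon)^{-3/2}\le y^{-1}$ shows this term is dominated by the same integrable function $y^{k-2}\left|\ell(t,y)-\ell(t,0)\right|$ and tends to zero pointwise, so a single application of dominated convergence covers it together with the principal term. Your route is therefore slightly shorter and cleaner; the only thing the paper's explicit computation buys is a quantitative convergence rate for that error term, which the theorem does not require.
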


\begin{remark}
    Since the SDE \eqref{0.q} has a strong solution, Theorem \ref{th: main theorem} immediately yields that the SDE \eqref{eq: dynamics of Y in main theorem}--\eqref{eq: L as limit and local time} also has a strong solution.
\end{remark}


\begin{remark}\label{rem: integral in L is good}
    Despite the fact that $\frac{4a}{\sigma^2} - 2 \in \left(-2,-1\right)$, the integral
    \[
        \int_0^{\infty} y^{\frac{4a}{\sigma^2} - 2} \left| \ell(t,y) - \ell(t,0)\right| dy
    \]
    is finite with probability 1. Indeed, denote $k := \frac{4a}{\sigma^2} \in(0,1)$ and observe that, by \eqref{eq: definition of ell} and properties of local time $L^{\widetilde W}$,
    \begin{equation}\label{eq: bound for ly minus l0}
        \sup_{y\ge 1} |\ell(t,y) - \ell(t,0)| < \infty
    \end{equation}
    with probability 1 for any $t\ge 0$. Moreover, since $L^{\widetilde W}(t,\cdot)$ is H\"older continuous of order up to $\frac{1}{2}$ a.s. (see e.g. calculations in \cite[Section IV.44]{Rogers_Williams_1987}), for any $\delta \in\left(0, \frac{1}{2}\right)$ and any fixed $t>0$ there exists a random variable $C>0$ such that, with probability~1,
    \begin{equation}\label{eq: Holder continuity of l}
        |\ell(t,y) - \ell(t,0)| \le C \cdot (S(y^2))^{\frac{1}{2}-\delta}.
    \end{equation}
    Hence, on the one hand,
    \[
        \int_1^\infty \left|\ell(t, y) - \ell(t, 0) \right|y^{k-2} dy < \infty \ \mbox{a.s.} 
    \]
    by \eqref{eq: bound for ly minus l0}. On the other hand, take $\delta \in \left(0,\frac{k}{2(2-k)}\right)$ and observe that \eqref{eq: Holder continuity of l} implies 
    \begin{align*}
        \int_0^1 \left|\ell(t, y) - \ell(t, 0) \right|y^{k-2}dy & \le  C \int_0^1 \left(S(y^2)\right)^{\frac{1}{2}-\delta} y^{k-2}dy 
        \\
        & = C \int_0^1 \left(\int_0^{y^2} z^{-\frac{k}{2}} e^{\beta z} dz\right)^{\frac{1}{2}-\delta} y^{k-2}dy 
        \\
        & \le C \int_0^1 \left(\int_0^{y^2} z^{-\frac{k}{2}} dz\right)^{\frac{1}{2}-\delta} y^{k-2}dy
        \\
        & \le C \int_0^1 y^{-1 + \frac{k}{2} - \delta'} dy < \infty \ \mbox{a.s.},
    \end{align*}
    where $\delta' := (2-k) \delta \in \left(0, \frac{k}{2}\right)$ and $C$ is a (random) constant that varies from line to line.
\end{remark}
Now we are ready to proceed to the proof of Theorem \ref{th: main theorem}.

\begin{proof}[Proof of Theorem \ref{th: main theorem}]
    In Section \ref{sec: preliminaries}, we obtained the representation \eqref{eq: dynamics of Y in main theorem} with $L$ being a \emph{ucp-limit} of the form
    \begin{align*}
        L(t) &= \lim_{\varepsilon \downarrow 0} \frac{1}{2} \int_0^t \left( \frac{a}{\sqrt{X(s) + \varepsilon}} - \frac{\sigma^2}{4} \frac{X(s)}{(X(s) + \varepsilon)^{\frac{3}{2}}} \right)ds
        \\
        & = - \lim_{\varepsilon \downarrow 0} \frac{1}{2} \int_0^t \left( \frac{\frac{\sigma^2}{4} - a}{\sqrt{X(s)+\varepsilon}} - \frac{\sigma^2}{4} \frac{\varepsilon}{(X(s) + \varepsilon)^{\frac{3}{2}}} \right)ds.
    \end{align*}
    Hence, one is left to prove that this limit exists in the sense of a.s. convergence and check that the last equality in \eqref{eq: L as limit and local time} holds.
    
    Let $k:= \frac{4a}{\sigma^2} \in(0,1)$ denote the dimension of the CIR process, i.e. we have to study the a.s.-limit of the form
    \begin{align*}
         L(t) &:= - \lim_{\varepsilon \downarrow 0} \frac{1}{2} \int_0^t \left( \frac{\frac{\sigma^2}{4} - a}{\sqrt{X(s)+\varepsilon}} - \frac{\sigma^2}{4} \frac{\varepsilon}{(X(s) + \varepsilon)^{\frac{3}{2}}} \right)ds
         \\
         & = - \frac{\sigma^2}{8} \lim_{\varepsilon \downarrow 0} \int_0^t \left( \frac{1-k}{\sqrt{X(s)+\varepsilon}} - \frac{\varepsilon}{\left(X(s) + \varepsilon \right)^{\frac{3}{2}}} \right)ds.
    \end{align*}
    Observe that
    \begin{equation*}
    \begin{aligned}
        \int_0^t \frac{1-k}{\sqrt{X(s)+\varepsilon}} ds &= \int_0^t \frac{1-k}{\sqrt{Y^2(s) + \varepsilon}} ds   = \int_0^\infty \frac{1-k}{\sqrt{y^2 + \varepsilon}}  L^Y(t,y) dy
        \\
        &= \int_0^\infty \frac{1-k}{\sqrt{y^2 + \varepsilon}}  y^{k-1}  \ell(t, y) dy
        \\
        & = \int_0^\infty \frac{1-k}{\sqrt{y^2 + \varepsilon}}  y^{k-1}  (\ell(t, y) - \ell(t, 0)) dy 
        \\
        &\quad + \ell(t, 0) \int_0^\infty \frac{1-k}{\sqrt{y^2 + \varepsilon}}  y^{k-1}  dy
    \end{aligned}    
    \end{equation*}
    and, similarly,
    \begin{equation*}
    \begin{aligned}
        \int_0^t \frac{\varepsilon}{\left(X(s) + \varepsilon \right)^{\frac{3}{2}}} ds & = \int_0^t \frac{\varepsilon}{(Y^2(s) + \varepsilon)^{\frac{3}{2}}} ds  = \int_0^\infty \frac{\varepsilon}{(y^2 + \varepsilon)^{\frac{3}{2}}} L^Y(t,y) dy
        \\
        &= \int_0^\infty \frac{\varepsilon}{(y^2 + \varepsilon)^{\frac{3}{2}}}  y^{k-1} \ell(t, y) dy
        \\
        &= \int_0^\infty \frac{\varepsilon}{(y^2 + \varepsilon)^{\frac{3}{2}}}  y^{k-1} (\ell(t, y) - \ell(t,0)) dy 
        \\
        &\quad+ \ell(t, 0) \int_0^\infty \frac{\varepsilon}{(y^2 + \varepsilon)^{\frac{3}{2}}} y^{k-1} dy.
    \end{aligned}    
    \end{equation*}
    Let us study separately the asymptotics of
    \begin{align*}
        I_1(\varepsilon) &:= \int_0^\infty \frac{1-k}{\sqrt{y^2 + \varepsilon}}  y^{k-1} (\ell(t, y) - \ell(t, 0)) dy,
        \\
        I_2(\varepsilon) & := \int_0^\infty \frac{\varepsilon}{(y^2 + \varepsilon)^{\frac{3}{2}}}  y^{k-1} (\ell(t, y) - \ell(t,0)) dy,
        \\
        I_3(\varepsilon) &:= \ell(t, 0) \left(\int_0^\infty \frac{1-k}{\sqrt{y^2 + \varepsilon}}  y^{k-1} dy - \int_0^\infty \frac{\varepsilon}{(y^2 + \varepsilon)^{\frac{3}{2}}} y^{k-1}  dy\right)
    \end{align*}
    as $\varepsilon \downarrow 0$. First, observe that for any $y \ge 0$
    \[
        \frac{1-k}{\sqrt{y^2 + \varepsilon}}  y^{k-1}  |\ell(t, y) - \ell(t, 0)| \le y^{k-2} |\ell(t, y) - \ell(t, 0)|
    \]
    and note that by Remark \ref{rem: integral in L is good},
    \[
       \int_0^{\infty} y^{k - 2} \left| \ell(t,y) - \ell(t,0)\right| dy < \infty \quad \mbox{a.s.}
    \]
    Thus, by the dominated convergence theorem, with probability 1,
    \begin{align*}
        \lim_{\varepsilon \downarrow 0} I_1(\varepsilon) &= \lim_{\varepsilon\downarrow 0}\int_0^\infty \frac{1-k}{\sqrt{y^2 + \varepsilon}} y^{k-1}  \left(\ell(t, y) - \ell(t, 0) \right) dy 
        \\
        &= (1-k) \int_0^\infty y^{k-2} \left(\ell(t, y) - \ell(t, 0) \right) dy.
    \end{align*}

    Next, observe that, with probability 1,
    \begin{equation}\label{I2-1}
    \begin{aligned}
        \bigg|\int_1^\infty &\frac{\varepsilon}{(y^2 + \varepsilon)^{\frac{3}{2}}} y^{k-1}  \left(\ell(t, y) - \ell(t, 0) \right) dy \bigg| 
        \\
        & \le \varepsilon \int_1^\infty \left|\ell(t, y) - \ell(t, 0) \right|y^{k-4} dy 
        \\
        &\le \varepsilon C \int_1^\infty y^{k-4}dy \to 0, \quad \varepsilon \downarrow 0.
    \end{aligned}   
    \end{equation}
    On the other hand, take an arbitrary $\delta\in \left(0,\frac{k}{2(2-k)}\right)$, denote $\delta' := (2-k)\delta$ and observe that \eqref{eq: Holder continuity of l} yields
    \begin{equation}\label{proofeq: I2-1}
    \begin{aligned}
        \int_0^1 &\frac{\varepsilon}{(y^2 + \varepsilon)^{\frac{3}{2}}} y^{k-1}  \left|\ell(t,y) - \ell(t, 0) \right| dy 
        \\
        & \le C \int_0^1 \frac{\varepsilon}{(y^2 + \varepsilon)^{\frac{3}{2}}} y^{k-1} (S(y^2))^{\frac{1}{2}-\delta} dy
        \\
        & = C \int_0^1 \frac{\varepsilon}{(y^2 + \varepsilon)^{\frac{3}{2}}} y^{k-1} \left(\int_0^{y^2} z^{-\frac{k}{2}} e^{\beta z} dz\right)^{\frac{1}{2}-\delta} dy
        \\
        & \le C \int_0^1 \frac{\varepsilon}{(y^2 + \varepsilon)^{\frac{3}{2}}} y^{k-1} \left(\int_0^{y^2} z^{-\frac{k}{2}}   dz\right)^{\frac{1}{2}-\delta} dy
        \\
        & \le C \int_0^1 \frac{\varepsilon}{(y^2 + \varepsilon)^{\frac{3}{2}}} y^{\frac{k}{2}-\delta'} dy
        \\
        & = C \varepsilon^{\frac{k}{4} -\frac{\delta'}{2}} \int_0^1 \frac{\varepsilon^{- \frac{1}{2}}}{ \left( \left({y}/{\sqrt{\varepsilon}}\right)^2 + 1 \right)^{\frac{3}{2}}} \left(\frac{y}{\sqrt{\varepsilon}}\right)^{\frac{k}{2} - \delta'} dy,
    \end{aligned}   
    \end{equation}
    where $\beta := \frac{2b}{\sigma^2}$. Hence, by substituting $z = y/\sqrt{\varepsilon}$ in \eqref{proofeq: I2-1}, we can write
    \begin{equation}\label{I2-2}
    \begin{aligned}
        \int_0^1 &\frac{\varepsilon}{(y^2 + \varepsilon)^{\frac{3}{2}}} y^{k-1}  \left|\ell(t,y) - \ell(t, 0) \right| dy 
        \\
        & \le  C \varepsilon^{\frac{k}{4}-\frac{\delta'}{2}} \int_0^{\infty} \frac{1}{ \left( z^2 + 1 \right)^{\frac{3}{2}}} z^{\frac{k}{2} - \delta'} dz
        \\
        & \to 0
    \end{aligned}    
    \end{equation}
    with probability 1 as $\varepsilon\downarrow 0$. Summarizing \eqref{I2-1} and \eqref{I2-2}, we obtain that, with probability 1,
    \begin{equation*}
    \begin{aligned}
        \lim_{\varepsilon \downarrow 0} I_2(\varepsilon) = 0.
    \end{aligned}
    \end{equation*}
    
    Finally, integration by parts yields
    \[
        \int_0^\infty \frac{y^{k-1}}{\sqrt{y^2+\varepsilon}}dy = \frac{1}{k} \int_0^\infty \frac{y^{k+1}}{(y^2+\varepsilon)^{\frac{3}{2}}} dy
    \]
    and the right-hand side of the last equation is equal to
    \begin{align*}
            & \frac{1}{k} \int_0^\infty \frac{y^{k-1} (y^2 + \varepsilon - \varepsilon)}{(y^2+\varepsilon)^{\frac{3}{2}}} dy 
        \\
        &= \frac{1}{k} \int_0^\infty \frac{y^{k-1}}{\sqrt{y^2+\varepsilon}} dy - \frac{1}{k} \int_0^\infty \frac{\varepsilon y^{k-1}}{(y^2+\varepsilon)^{\frac{3}{2}}} dy.
    \end{align*}
    Therefore
    \[
        \int_0^\infty \frac{\varepsilon}{(y^2 + \varepsilon)^{\frac{3}{2}}} y^{k-1} dy = \int_0^\infty \frac{1 - k}{\sqrt{y^2 + \varepsilon}} y^{k-1} dy 
    \]
    and
    \[
        I_3(\varepsilon) = \ell(t, 0) \left( \int_0^\infty \frac{1 - k}{\sqrt{y^2 + \varepsilon}} y^{k-1} dy -   \int_0^\infty \frac{\varepsilon}{(y^2 + \varepsilon)^{\frac{3}{2}}} y^{k-1} dy\right) = 0.
    \]
    Summarizing all of the above and recalling that $k=\frac{4a}{\sigma^2}$, we finally obtain that with probability 1
    \begin{align*}
        L(t) &= -\frac{\sigma^2}{8} \lim_{\varepsilon \downarrow 0} \left( I_1(\varepsilon) - I_2(\varepsilon) + I_3(\varepsilon)\right)
        \\
        & = -\frac{\sigma^2}{8} \left(1-k\right) \int_0^\infty y^{k-2} \left(\ell(t, y) - \ell(t, 0) \right) dy
        \\
        & = -\frac{1}{2}\left(\frac{\sigma^2}{4} - a \right) \int_0^{\infty} y^{\frac{4a}{\sigma^2} - 2} \left( \ell(t,y) - \ell(t,0) 
        \right) dy 
    \end{align*}
    which ends the proof.
\end{proof}

\begin{remark}\label{rem: a.s. convergence}
    Theorem \ref{th: main theorem} implies that the limit 
    \begin{equation}\label{eq: L a.s. convergence}
        L(t) = \lim_{n \to\infty} \frac{1}{2} \int_0^t \left( \frac{a}{\sqrt{X(s) + \varepsilon_n}} - \frac{\sigma^2}{4} \frac{X(s)}{(X(s) + \varepsilon_n)^{\frac{3}{2}}} \right)ds
    \end{equation}
    exists a.s. for any sequence $\{\varepsilon_n,~n\ge 1\}$ such that $\varepsilon_n\downarrow 0$ and does not depend on the particular choice of the sequence. Moreover, the proof of Theorem \ref{th: main theorem} yields that the existence of the limit \eqref{eq: L a.s. convergence} is ensured for all $\omega$ such that $L^{\widetilde W}(\omega; t, \cdot)$ is H\"older continuous. In other words, the set of full probability where \eqref{eq: L a.s. convergence} holds can be chosen independently of a particular sequence $\{\varepsilon_n,~n\ge 1\}$, as anticipated in Remark \ref{rem: anticipation of a.s. convergence}.   
\end{remark}

\section{Discussion of the results}\label{sec: simple cases}

It is evident that the nature of the limit in \eqref{eq: def of L} heavily depends on the relation between parameters $a$ and $\sigma$. Therefore, in order to put our findings from Section \ref{sec: hard case} into context, let us provide some relevant results from \cite{MYT2022} on the behavior of $Y$ when $a\ge\frac{\sigma^2}{4}$.

\subsection{Square root of the CIR process when $a\ge \frac{\sigma^2}{4}$}

\paragraph{Case I: $a>\frac{\sigma^2}{4}$.} Observe that, if 
\begin{equation}\label{eq: int 1 over Y}
    \int_0^t \frac{1}{\sqrt{X(s)}}ds = \int_0^t \frac{1}{Y(s)}ds < \infty \quad a.s.,
\end{equation}
then the limit \eqref{eq: def of L} is equal to
\[
    L(t) = \frac{1}{2} \left(a - \frac{\sigma^2}{4}\right) \int_0^t \frac{1}{Y(s)}ds
\]
by monotone convergence. This is clearly the case for $a \ge \frac{\sigma^2}{2}$: indeed $a \ge \frac{\sigma^2}{2}$ implies that $X$ (and hence $Y$) has strictly positive paths a.s. (see e.g. \cite{Andersen_Piterbarg_2005} or \cite{Cox_Ross_1976}) and therefore \eqref{eq: int 1 over Y} holds for all $t\ge 0$. It turns out (see e.g. \cite[Theorem 2.1(a)]{MYT2022}) that \eqref{eq: int 1 over Y} also holds if $\frac{\sigma^2}{4}<a< \frac{\sigma^2}{2}$, i.e. one can prove the following result.

\begin{theorem}\label{th: greater case}{\emph{(\cite[Theorem 2.1(a)]{MYT2022})}.} Let $a>\frac{\sigma^2}{4}$. Then, for any $t\ge 0$,
    \begin{equation}\label{eq: int 1 over Y 2}
        \int_0^t \frac{1}{Y(s)}ds < \infty \quad a.s.
    \end{equation}
    and $Y$ a.s. satisfies the SDE of the form
    \begin{equation}\label{equ:posit}
        Y(t) = \sqrt{x_0} + \frac{1}{2} \left(a - \frac{\sigma^2}{4}\right) \int_0^t \frac{1}{Y(s)}ds - \frac{b}{2} \int_0^t Y(s) ds + \frac{\sigma}{2} W(t). 
    \end{equation}
\end{theorem}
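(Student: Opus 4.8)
The representation developed in Section~\ref{sec: preliminaries} already gives, in every parameter regime,
\[
Y(t) = \sqrt{x_0} - \frac{b}{2}\int_0^t Y(s)\,ds + \frac{\sigma}{2}W(t) + L(t),
\]
with $L$ the ucp-limit \eqref{eq: def of L}. The plan is therefore to reduce the whole statement to the single analytic fact \eqref{eq: int 1 over Y 2}, namely $\int_0^t Y(s)^{-1}\,ds<\infty$ a.s., and then to identify $L$ explicitly. Granting finiteness, the SDE \eqref{equ:posit} follows by substitution, so the bulk of the work is the integrability claim.

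First I would recast the integrand in \eqref{eq: def of L}. Writing $\frac{X}{(X+\varepsilon)^{3/2}}=\frac{1}{\sqrt{X+\varepsilon}}-\frac{\varepsilon}{(X+\varepsilon)^{3/2}}$ turns it into
\[
\frac{a-\frac{\sigma^2}{4}}{\sqrt{X(s)+\varepsilon}} + \frac{\sigma^2}{4}\,\frac{\varepsilon}{(X(s)+\varepsilon)^{3/2}}.
\]
Since $a>\frac{\sigma^2}{4}$, the first summand is nonnegative and increases to $\frac{a-\sigma^2/4}{\sqrt{X(s)}}$ as $\varepsilon\downarrow0$, so monotone convergence handles it provided $\int_0^t X(s)^{-1/2}\,ds<\infty$. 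The second summand is dominated by $\frac{1}{\sqrt{X(s)+\varepsilon}}\le X(s)^{-1/2}$ and tends to $0$ for a.e.\ $s$, because $\{s:X(s)=0\}$ is Lebesgue-null (recall $\mathbb E\int_0^\infty\mathbbm 1_{\{X(s)=0\}}\,ds=0$); dominated convergence then kills it. Combining the two yields $L(t)=\frac12\big(a-\frac{\sigma^2}{4}\big)\int_0^t Y(s)^{-1}\,ds$, which is exactly \eqref{equ:posit}. Thus everything hinges on finiteness of the inverse integral.

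To prove \eqref{eq: int 1 over Y 2} I would split according to the boundary behaviour of $X$ at $0$. If $a\ge\frac{\sigma^2}{2}$, the paths of $X$ are a.s.\ strictly positive, hence $\inf_{s\in[0,t]}X(s)>0$ a.s.\ by continuity and the integral is trivially finite. For the genuinely delicate band $\frac{\sigma^2}{4}<a<\frac{\sigma^2}{2}$, where $0$ is accessible, I would instead bound the expectation and apply Tonelli: it suffices that $\int_0^t\mathbb E\big[X(s)^{-1/2}\big]\,ds<\infty$. Here one uses that $X$ is a deterministic scale-and-time change of a squared Bessel process of dimension $\delta=\frac{4a}{\sigma^2}\in(1,2)$, whose transition density behaves like $x^{\delta/2-1}$ near $x=0$; consequently $\mathbb E[X(s)^{-1/2}]=\int_0^\infty x^{-1/2}p(s;x_0,x)\,dx$ converges at the origin precisely when $\delta>1$, i.e.\ $a>\frac{\sigma^2}{4}$, and (by the Bessel scaling $R_u\stackrel{d}{=}uR_1$ when $x_0=0$) is of order $s^{-1/2}$ as $s\downarrow0$, which is integrable on $[0,t]$.

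The main obstacle is exactly this integrability estimate in the accessible regime: one must simultaneously control the singularity of the transition density at $x=0$ (which is integrable against $x^{-1/2}$ only because $\delta>1$) and the blow-up of $\mathbb E[X(s)^{-1/2}]$ as $s\downarrow0$ when $x_0=0$. A clean alternative that sidesteps density asymptotics is to invoke the scale/speed representation $X(t)=S^{-1}(\widetilde W(\tau_t))$ from Proposition~\ref{prop: CIR as RBM}, whose construction remains valid for $a<\frac{\sigma^2}{2}$ since the scale function \eqref{eq: S} is still integrable at $0$. Changing time by $\varphi$ and applying the occupation-time formula for $\widetilde W$ rewrites $\int_0^t X(s)^{-1/2}\,ds$ as $\frac{1}{\sigma^2}\int_0^\infty x^{\frac{2a}{\sigma^2}-\frac32}e^{-\frac{2b}{\sigma^2}x}L^{\widetilde W}(\tau_t,S(x))\,dx$; since $L^{\widetilde W}(\tau_t,\cdot)$ is continuous, bounded, and compactly supported, this integral converges a.s.\ precisely because $\frac{2a}{\sigma^2}-\frac32>-1\Leftrightarrow a>\frac{\sigma^2}{4}$, recovering the same threshold in a pathwise and more transparent manner.
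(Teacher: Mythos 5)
Your proposal is correct, but it is worth pointing out that the paper itself never proves this theorem: it is quoted from \cite[Theorem 2.1(a)]{MYT2022}, and the only reasoning supplied in the paper is the paragraph preceding the statement --- namely, that if $\int_0^t Y(s)^{-1}\,ds<\infty$ a.s.\ then the limit \eqref{eq: def of L} equals $\frac{1}{2}\left(a-\frac{\sigma^2}{4}\right)\int_0^t Y(s)^{-1}\,ds$ ``by monotone convergence'', with finiteness being clear for $a\ge\frac{\sigma^2}{2}$ by strict positivity of paths and deferred to the citation for the band $\frac{\sigma^2}{4}<a<\frac{\sigma^2}{2}$. You follow the same reduction, but you make it rigorous and then self-contained, which is genuinely more than the paper does. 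Two points of comparison. First, your splitting of the integrand into the monotone term $\left(a-\frac{\sigma^2}{4}\right)(X+\varepsilon)^{-1/2}$ (handled by monotone convergence) and the term $\frac{\sigma^2}{4}\varepsilon(X+\varepsilon)^{-3/2}$, dominated by $X^{-1/2}$ and vanishing for a.e.\ $s$ (handled by dominated convergence), is the correct justification of the paper's one-line appeal to monotone convergence; the original integrand is not monotone in $\varepsilon$, so a decomposition of this kind is actually needed. Second, for the one hard point --- integrability of $Y^{-1}$ when $\frac{\sigma^2}{4}<a<\frac{\sigma^2}{2}$, where zero is attainable --- both of your arguments are sound: the Tonelli/density route works modulo the squared-Bessel density asymptotics you flag, while the scale-and-speed route is the more elegant one and very much in the spirit of this paper, since Proposition \ref{prop: CIR as RBM} does extend to all $0<a<\frac{\sigma^2}{2}$ (finiteness of \eqref{eq: S} at zero only requires $2a<\sigma^2$, and $0$ remains a regular, instantaneously reflecting boundary), after which the occupation-time identity exhibits $\int_0^t X(s)^{-1/2}\,ds$ as the integral of a continuous, compactly supported local time against $x^{\frac{2a}{\sigma^2}-\frac{3}{2}}e^{-\frac{2b}{\sigma^2}x}$, convergent at the origin precisely when $\frac{2a}{\sigma^2}-\frac{3}{2}>-1$, i.e.\ $a>\frac{\sigma^2}{4}$, recovering the sharp threshold pathwise. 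If you write this up, state explicitly two small bookkeeping facts: the a.s.\ limit you compute coincides with the ucp limit $L$ because limits in probability are a.s.\ unique, and the occupation formula may be applied at the random time $\tau_t$ because the identity holds a.s.\ simultaneously for all times.
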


\begin{remark}
    Using the same arguments as in \cite[Theorem 3.2]{Cherny_2000}, it is possible to prove that for $a > \frac{\sigma^2}{4}$ the process $Y = \sqrt{X}$ is the unique \emph{non-negative} strong solution to the SDE \eqref{equ:posit}. However, if $\frac{\sigma^2}{4} < a < \frac{\sigma^2}{2}$, \eqref{equ:posit} has other strong solutions; moreover, the uniqueness in law does not hold for \eqref{equ:posit}. For a more detailed discussion of this phenomenon, we refer the reader to \cite{Blei_2012} whereas a comprehensive overview of SDEs of the type \eqref{equ:posit} can be found in \cite{Cherny_Engelbert_2005}. 
\end{remark}

\paragraph{Case II: ${a = \frac{\sigma^2}{4}}$.} The case $a = \frac{\sigma^2}{4}$ turns out to be different from the one described above: in this regime, $X$ can hit zero (see e.g. \cite{Andersen_Piterbarg_2005} or \cite{Cox_Ross_1976}) and, as noted in e.g. \cite[Theorem 2.1(b)]{MYT2022}, \eqref{eq: int 1 over Y 2} does not hold for all $$t>\inf\{s\ge 0~|~Y(s) = 0\}.$$ However, the limit $L$ from \eqref{eq: def of L} has a simple interpretation in terms of Skorokhod reflections (see e.g. the seminal works \cite{Skorokhod1961, Skorokhod1962}) as summarized in the following theorem.

\begin{theorem}\label{th: equal case}{\emph{(\cite[Theorem 2.1(b)]{MYT2022})}.} Let $a=\frac{\sigma^2}{4}$ and denote $\tau:= \inf\{s\ge 0~|~X(s) = 0\}$.
    \begin{itemize}
        \item[1)] For all $\gamma>0$,
        \[
            \int_0^{\tau+\gamma} \frac{1}{Y(s)}ds = \infty \quad a.s.
        \]

        \item[2)] The processes $Y:= \sqrt{X}$ and $L$ defined by \eqref{eq: def of L} is the (unique) solution to Skorokhod problem
        \begin{equation}\label{eq: representation of Y, reflected case}
            Y(t) = \sqrt{x_0} - \frac{b}{2} \int_0^t Y(s)ds + \frac{\sigma}{2}W(t) + L(t),
        \end{equation}
        with $L$ being the corresponding Skorokhod reflection function, i.e. a continuous non-decreasing process starting at 0 with points of growth occurring only at zeros of $Y$ and such that $Y(t) \ge 0$.
    \end{itemize}
\end{theorem}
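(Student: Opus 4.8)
The plan is to start from the representation of $L$ already obtained in Section~\ref{sec: preliminaries}. The same algebraic rearrangement that produces the second expression for $L$ in Theorem~\ref{th: main theorem}, namely $\frac{X}{(X+\varepsilon)^{3/2}}=\frac{1}{\sqrt{X+\varepsilon}}-\frac{\varepsilon}{(X+\varepsilon)^{3/2}}$, is valid for every $a$ and specializes at $a=\frac{\sigma^2}{4}$ (where the coefficient $\frac{\sigma^2}{4}-a$ vanishes) to
\[
L(t)=\frac{\sigma^2}{8}\,\lim_{\varepsilon\downarrow 0}\int_0^t\frac{\varepsilon}{(X(s)+\varepsilon)^{3/2}}\,ds,
\]
the limit being ucp and, along a suitable subsequence $\varepsilon_n\downarrow 0$ (Remark~\ref{rem: anticipation of a.s. convergence}), almost sure. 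Since the integrand is nonnegative, each approximant is a nonnegative, nondecreasing, continuous function of $t$ vanishing at $t=0$; passing to the a.s. limit shows $L$ is nondecreasing with $L(0)=0$, while continuity of $L$ is already recorded in Section~\ref{sec: preliminaries}. Together with $Y=\sqrt{X}\ge 0$, this gives three of the four defining properties of the Skorokhod reflection function in \eqref{eq: representation of Y, reflected case}.

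To finish part~2) I would establish the support property, that $L$ increases only on $\{s:Y(s)=0\}$. If $[t_1,t_2]$ is a compact interval on which $Y>0$, then $X\ge\eta>0$ there, whence $\frac{\varepsilon}{(X(s)+\varepsilon)^{3/2}}\le \varepsilon\,\eta^{-3/2}$ on $[t_1,t_2]$ and the corresponding increment of each approximant is at most $\frac{\sigma^2}{8}\varepsilon\eta^{-3/2}(t_2-t_1)\to 0$; hence $L(t_2)=L(t_1)$. Since the complement of the zero set of $Y$ is a countable union of such open intervals, $L$ is constant off the zero set, which is exactly the required property. Uniqueness of the pair $(Y,L)$ solving the Skorokhod problem is then standard: for a fixed driving path the Skorokhod map is Lipschitz in the supremum norm, and since the drift $y\mapsto-\frac b2 y$ is Lipschitz, a Picard/fixed-point argument yields pathwise uniqueness of the reflected SDE \eqref{eq: representation of Y, reflected case}.

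For part~1) I would first prove the implication: if $\int_0^t Y(s)^{-1}ds<\infty$ then $L(t)=0$. Indeed, for $X>0$ one has $\frac{\varepsilon}{(X+\varepsilon)^{3/2}}\le \frac{1}{\sqrt X}=\frac1Y$ (because $(X+\varepsilon)^{3/2}\ge\varepsilon\sqrt X$) and $\frac{\varepsilon}{(X+\varepsilon)^{3/2}}\to 0$ pointwise on $\{X>0\}$, while $\{X=0\}$ is Lebesgue-null in time a.s. (as $\mathbb E\int_0^\infty\mathbbm 1_{\{X(s)=0\}}ds=0$). Dominated convergence then gives $\int_0^t\frac{\varepsilon}{(X+\varepsilon)^{3/2}}ds\to0$, i.e. $L(t)=0$. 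Contrapositively, $L(t)>0$ forces $\int_0^t Y^{-1}ds=\infty$, so it suffices to show $L(\zeta+\gamma)>0$ a.s. for every $\gamma>0$, where $\zeta=\inf\{s:X(s)=0\}$ is the hitting time of the statement.

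The main obstacle is precisely this strict positivity of the reflection term immediately after $\zeta$. I would argue by contradiction via the strong Markov property: since $L$ is nondecreasing with $L(0)=0$, the equality $L(\zeta+\gamma)=0$ would force $L\equiv0$ on $[0,\zeta+\gamma]$, so on $[\zeta,\zeta+\gamma]$ the process $Y$ would solve $dY=-\frac b2 Y\,ds+\frac\sigma2 dW$ with $Y(\zeta)=0$ — an Ornstein--Uhlenbeck process started at the origin. As $\zeta$ is a stopping time, $W(\zeta+\cdot)-W(\zeta)$ is a Brownian motion, so this process takes strictly negative values in every right neighbourhood of $\zeta$ a.s., contradicting $Y\ge0$. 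Hence $L$ must increase on $[\zeta,\zeta+\gamma]$, giving $L(\zeta+\gamma)>0$. (Equivalently, one may identify $L$ with a constant multiple of the semimartingale local time of $Y$ at $0$ through Tanaka's formula and the support property of part~2), or transfer the positivity of $L^{\widetilde W}(\cdot,0)$ after the first visit to $0$ through the time change of Proposition~\ref{prop: CIR as RBM}.) Taking $\gamma_m\downarrow0$ and using monotonicity of $\gamma\mapsto\int_0^{\zeta+\gamma}Y^{-1}ds$ upgrades the conclusion to all $\gamma>0$ on a single set of full probability, completing part~1).
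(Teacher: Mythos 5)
This theorem is not proved in the present paper at all: it is quoted verbatim from \cite[Theorem 2.1(b)]{MYT2022}, so there is no in-paper argument to compare yours against --- the only ingredients the paper itself supplies are the ucp limit \eqref{eq: def of L} and the continuity of $L$, which are precisely your starting points. Taken as a self-contained derivation, your proposal is correct. The specialization of the identity $\frac{X}{(X+\varepsilon)^{3/2}}=\frac{1}{\sqrt{X+\varepsilon}}-\frac{\varepsilon}{(X+\varepsilon)^{3/2}}$ at $a=\frac{\sigma^2}{4}$ does collapse the approximants to $\frac{\sigma^2}{8}\int_0^t \frac{\varepsilon}{(X(s)+\varepsilon)^{3/2}}ds$, and since ucp convergence gives a.s.\ \emph{uniform} convergence on $[0,T]$ along a subsequence, monotonicity, $L(0)=0$ and the support property pass to the limit exactly as you claim; the bound $\frac{\varepsilon}{(X+\varepsilon)^{3/2}}\le \frac{1}{\sqrt{X}}$ together with $\mathbb{E}\int_0^\infty \mathbbm{1}_{\{X(s)=0\}}ds=0$ (already used in Section \ref{sec: preliminaries}) makes the implication ``$\int_0^t Y^{-1}ds<\infty \Rightarrow L(t)=0$'' airtight, and the Lipschitz-Skorokhod-map/Gronwall uniqueness argument is standard and valid.

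Two points deserve to be made explicit, though neither is a genuine gap. First, in the contradiction step for $L(\tau+\gamma)>0$ you invoke uniqueness for the OU equation, but $Y$ solves that equation only \emph{on the event} $\{L(\tau+\gamma)=0\}$, whereas pathwise uniqueness theorems are stated for solutions defined on the whole space; this is harmless here precisely because the noise is additive --- the difference $D$ between $Y$ and the globally defined OU solution started at $0$ at time $\tau$ satisfies the pathwise ODE $D(t)=-\frac{b}{2}\int_\tau^t D(s)\,ds$ on that event, so Gronwall applies $\omega$-by-$\omega$ --- and you should say so. Second, your argument implicitly uses $\tau<\infty$ a.s.\ (needed both to apply the strong Markov property of $W$ at $\tau$ and for the statement itself to have content) and the fact that the OU process started at $0$ takes negative values in every right neighbourhood a.s.\ (strong Markov property plus, e.g., the law of the iterated logarithm for the time-changed Brownian integral); both are true under the standing assumptions $a=\frac{\sigma^2}{4}$, $b\ge 0$ of Section \ref{sec: preliminaries}, but each needs a one-line justification or citation.
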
 

\begin{remark}
    Item 2) of Theorem \ref{th: equal case} states that,  when $a = \frac{\sigma^2}{4}$, the square root process $Y = \sqrt{X}$ coincides with a \textit{reflected Ornstein-Uhlenbeck  (ROU) process}. More details on the latter can be found in e.g. \cite{WG2003-2}.
\end{remark}

\subsection{Comparison to the low-dimensional case}

As we have seen in Section \ref{sec: hard case}, the case $0 < a < \frac{\sigma^2}{4}$ is arguably the most challenging one and leads to the most involved value of the limit \eqref{eq: def of L}. First of all, note that \eqref{eq: int 1 over Y} does not hold due to Theorem~\ref{th: equal case} together with the comparison theorem for solutions of SDEs (see e.g. \cite{IW1977}). Next, the limit $L$ in \eqref{eq: def of L} cannot be non-decreasing in $t$ as it happens when $a\ge \frac{\sigma^2}{4}$. Indeed, consider $\tau \ge 0$ such that $X(\tau) > 0$. Then, by a.s. continuity of $X$, there exists a neighborhood $\tau_- < \tau < \tau_+$ such that $X$ is bounded away from zero on $(\tau_-,\tau_+)$. Denote now $-\delta := a - \frac{\sigma^2}{4}$, $\delta>0$. Then, with probability 1, for all $\tau_- < t_1 < t_2 < \tau_+$
    \begin{align*}
        L(t_2) - L(t_1) & = \lim_{n\to\infty} \frac{1}{2} \int_{t_1}^{t_2} \left( \frac{a}{\sqrt{X(s) + \varepsilon_n}} - \frac{\sigma^2}{4} \frac{X(s)}{(X(s) + \varepsilon_n)^{\frac{3}{2}}} \right)ds
        \\
        &= \lim_{n\to\infty} \frac{1}{2} \int_{t_1}^{t_2} \left( \frac{\sigma^2}{4}\frac{\varepsilon_n}{(X(s) + \varepsilon_n)^{\frac{3}{2}}} -  \frac{\delta}{\sqrt{X(s) + \varepsilon_n}} \right)ds
        \\
        & =  -\frac{1}{2} \int_{t_1}^{t_2} \frac{\delta}{\sqrt{X(s)}} ds 
        \\
        &< 0.
    \end{align*}
On the other hand, $L$ is not strictly decreasing on the entire $[0,T]$: if it is strictly decreasing (and, since $L(0)=0$, non-positive), then $Y \le U$, where $U$ is the standard Ornstein-Uhlenbeck process defined by
\[
    U(t) = Y(0) - \frac{b}{2}\int_0^t U(s) ds + \frac{\sigma}{2} W(t).
\]
However, it is not possible since $Y$ cannot take negative values.

\section{Connection to Skorokhod reflections}\label{sec: connection to Skorokhod reflections}

Finally, let us present the connection of low-dimensional CIR processes with Skorokhod problems. For $\delta > -\frac{\sigma^2}{4}$, consider a family of CIR processes
$\{X_\delta\}$ with $a = a(\delta) = \frac{\sigma^2}{4} + \delta$ and defined by
\begin{equation}\label{eq: family of CIR}
    X_\delta(t) = x_0 + \int_0^t \left(\frac{\sigma^2}{4} + \delta - bX_\delta(s)\right)ds + \sigma \int_0^t \sqrt{X_\delta(s)}dW(s).
\end{equation}
As described above in Sections \ref{sec: hard case}--\ref{sec: simple cases}, the process $Y_\delta := \sqrt{X_\delta}$ satisfies the SDE of the form
\[
    Y_\delta(t) = \sqrt{x_0} - \frac{b}{2} \int_0^t Y_\delta(s) ds + \frac{\sigma}{2} W(t) + L_{\delta}(t),
\]
where the term $L_\delta$ depends on the parameter $\delta$ as follows:
\begin{itemize}
    \item[---] if $-\frac{\sigma^2}{4} < \delta < 0$,
    \begin{align*}
        L_{\delta}(t) &= -\frac{1}{2}\left(\frac{\sigma^2}{4} - a \right) \int_0^{\infty} y^{\frac{4a}{\sigma^2} - 2} \left( \ell(t,y) - \ell(t,0) 
        \right) dy
        \\
        & =  \frac{\delta}{2}\int_0^{\infty} y^{\frac{4\delta}{\sigma^2} - 1} \left( \ell(t,y) - \ell(t,0) 
        \right) dy,
    \end{align*}
    where $\ell(t,y)=y^{-\frac{4\delta}{\sigma^2}} L^{Y_\delta}(t,y)$ is the normalized local time of $Y_{\delta}$, see  \eqref{eq: definition of ell};

    \item[---] if $\delta > 0$,
    \begin{equation*}\label{eq: sqrtCIR, delta positive}
        L_\delta(t) = \frac{1}{2} \int_0^t \frac{\delta}{Y_\delta(s)}ds
    \end{equation*}
    and the integral is well-defined and finite with probability 1;

    \item[---] if $\delta = 0$, $L_{0}$ is the Shorokhod reflection function, i.e. a continuous non-decreasing process with points of growth occurring only at zeros of $Y_0$ and such that $Y_{0} \ge 0$, which is a symmetric local time of $Y_0$ at 0; in particular, $Y_0$ is a reflected Ornstein-Uhlenbeck process.
\end{itemize}

The dynamics of $Y_{\delta}$ with $\delta \ge 0$ described above allowed \cite{MYT2022} to obtain the following alternative representation to the Skorokhod reflection function $L_0$.
\begin{theorem}\label{th: representation of L from the right}{\emph{(\cite[Theorem 2.4]{MYT2022})}.}
    Let $\{\delta_n,~n\ge 1\}$ be an arbitrary positive sequence such that $\delta_n \downarrow 0$, $n\to \infty$. Then, with probability 1, for any $T>0$
    \[
        \sup_{t\in[0,T]} |Y_{\delta_n}(t) - Y_0(t)| \to 0
    \]
    and
    \[
        \sup_{t\in[0,T]} \left| L_0(t) - L_{\delta_n}(t) \right| = \sup_{t\in[0,T]} \left| L_0(t) - \frac{1}{2} \int_0^t \frac{\delta_n}{\sqrt{X_{\delta_n}(s)}}ds \right| \to 0
    \]
    as $n\to \infty$.
\end{theorem}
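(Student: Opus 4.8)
The plan is to exploit that the entire family $\{X_{\delta_n}\}$ together with $X_0$ is driven by the \emph{same} Brownian motion $W$, so everything lives on one probability space and can be compared pathwise. First I would invoke the comparison theorem for one-dimensional SDEs (see \cite{IW1977}): since the drift $\frac{\sigma^2}{4}+\delta-bx$ in \eqref{eq: family of CIR} is increasing in $\delta$ and $\delta_n\downarrow 0$ is monotone, we obtain $X_{\delta_{n+1}}(t)\le X_{\delta_n}(t)$ for all $t$ a.s. Hence $\{X_{\delta_n}\}$ is a non-increasing sequence of non-negative continuous processes, and the pointwise limit $\bar X(t):=\lim_n X_{\delta_n}(t)$ exists a.s.

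The crucial step is to identify $\bar X$ with $X_0$. For this I would pass to the limit in \eqref{eq: family of CIR}. The drift integral converges by dominated convergence, using $0\le X_{\delta_n}\le X_{\delta_1}$ and $\delta_n\to 0$; for the martingale part, $\sqrt{X_{\delta_n}}\to\sqrt{\bar X}$ pointwise with $|\sqrt{X_{\delta_n}}-\sqrt{\bar X}|^2\le 2X_{\delta_1}$, so dominated convergence together with the It\^o isometry yields $\sigma\int_0^\cdot\sqrt{X_{\delta_n}(s)}\,dW(s)\to\sigma\int_0^\cdot\sqrt{\bar X(s)}\,dW(s)$ in $L^2$, hence a.s. along a subsequence. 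The limit therefore solves the CIR equation with $a=\frac{\sigma^2}{4}$, and pathwise uniqueness for this equation (Yamada--Watanabe, since $x\mapsto\sigma\sqrt{x}$ is $\tfrac12$-H\"older) forces $\bar X=X_0$ a.s. Taking square roots, $Y_{\delta_n}=\sqrt{X_{\delta_n}}\downarrow\sqrt{X_0}=Y_0$ pointwise. As this is a monotone sequence of continuous functions converging to the continuous limit $Y_0$ on the compact $[0,T]$, Dini's theorem upgrades this to uniform convergence, proving the first claim $\sup_{t\in[0,T]}|Y_{\delta_n}(t)-Y_0(t)|\to 0$ a.s.

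For the reflection terms I would use the identity, valid for every $\delta\ge 0$ by the SDE satisfied by $Y_\delta$,
\[
L_\delta(t)=Y_\delta(t)-\sqrt{x_0}+\frac{b}{2}\int_0^t Y_\delta(s)\,ds-\frac{\sigma}{2}W(t).
\]
Subtracting the $\delta=0$ version cancels the Brownian term and the initial value, leaving
\[
L_{\delta_n}(t)-L_0(t)=\bigl(Y_{\delta_n}(t)-Y_0(t)\bigr)+\frac{b}{2}\int_0^t\bigl(Y_{\delta_n}(s)-Y_0(s)\bigr)\,ds.
\]
Taking the supremum over $[0,T]$ and bounding the integral by $\frac{|b|T}{2}\sup_{s\in[0,T]}|Y_{\delta_n}(s)-Y_0(s)|$ shows both terms are dominated by the uniform distance already shown to vanish, so $\sup_{t\in[0,T]}|L_{\delta_n}(t)-L_0(t)|\to 0$ a.s. The stated equality $L_{\delta_n}(t)=\frac{1}{2}\int_0^t\frac{\delta_n}{\sqrt{X_{\delta_n}(s)}}\,ds$ is precisely the representation of $L_\delta$ for $\delta>0$ recorded earlier in this section.

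The main obstacle I anticipate is the identification $\bar X=X_0$: one must justify passing to the limit inside the stochastic integral and then appeal to pathwise uniqueness of the CIR equation at the critical value $a=\frac{\sigma^2}{4}$. Everything else---the comparison step, Dini's theorem, and the algebraic cancellation in $L_{\delta_n}-L_0$---is routine once this identification is secured. A minor point to handle carefully is that the monotone limit and its identification with $X_0$ should be arranged on a single set of full probability (reducing to rational times via path-continuity), so that Dini's theorem applies on that set for every $T$ simultaneously.
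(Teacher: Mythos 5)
First, note that the paper does not actually prove this statement---it is imported verbatim from \cite[Theorem 2.4]{MYT2022}; the only proof of this kind written out in the paper is for the companion theorem in the regime $a=\frac{\sigma^2}{4}-\delta_n$. Your skeleton coincides with that proof's skeleton: Ikeda--Watanabe comparison to get pathwise monotonicity in $n$, pointwise a.s.\ convergence of the marginals, Dini's theorem for uniformity, and the algebraic identity $L_\delta(t)=Y_\delta(t)-\sqrt{x_0}+\frac{b}{2}\int_0^t Y_\delta(s)\,ds-\frac{\sigma}{2}W(t)$ to transfer uniform convergence from $Y$ to $L$ (this last part of your argument is exactly the paper's and is fine, as is the identification of $L_{\delta_n}$ with $\frac{1}{2}\int_0^t\delta_n X_{\delta_n}(s)^{-1/2}ds$ for $\delta_n>0$). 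Where you genuinely differ is the identification of the monotone limit: the paper invokes an $L^1$-convergence result (\cite[Theorem 4.1]{Mishura2009}) plus the fact that monotone convergence in probability implies a.s.\ convergence, whereas you pass to the limit inside \eqref{eq: family of CIR} and appeal to Yamada--Watanabe pathwise uniqueness at the critical parameter. That substitution is legitimate and self-contained.

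There is, however, a genuine gap, located exactly at the point you flagged as minor. Your limit-passing argument (It\^o isometry at fixed $t$, plus subsequences) only shows that $\bar X$ is a \emph{modification} of $X_0$: for each fixed $t$ there is a null set $N_t$ off which $\bar X(t)=X_0(t)$. Dini's theorem, applied pathwise, needs $X_{\delta_n}(t)\to X_0(t)$ for \emph{every} $t\in[0,T]$ on a \emph{single} full-probability event, and your proposed repair---``reducing to rational times via path-continuity''---does not deliver this: the monotone limit $\bar X=\inf_n X_{\delta_n}$ is only upper semicontinuous, and an u.s.c.\ function can agree with a continuous one on all rationals yet exceed it at a single random time. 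Concretely, if $U$ is uniform on $[0,1]$ and $f_n(t)=\max\left(0,\,1-n|t-U|\right)$, then the $f_n$ are continuous, decrease in $n$, and $f_n(t)\to 0$ a.s.\ for every fixed $t$ (hence simultaneously for all rational $t$), yet $\sup_{t\in[0,1]}f_n(t)=1$ for every $n$; equality on a dense set propagates inequalities, not equalities. (The paper's own proof of the $\delta\to 0-$ theorem glosses over the same point.) The fix is available inside your own toolkit: replace the fixed-$t$ It\^o isometry by Doob's maximal inequality. Writing the difference of the two SDEs, using $(\sqrt{x}-\sqrt{y})^2\le|x-y|$, the comparison bound $X_0\le X_{\delta_n}$, and Gronwall on $g_n(t):=\mathbb E[X_{\delta_n}(t)-X_0(t)]\le \delta_n t e^{|b|t}$, one gets
\begin{equation*}
\mathbb E\sup_{t\in[0,T]}\left|X_{\delta_n}(t)-X_0(t)\right|\;\le\;\delta_n T+|b|\int_0^T g_n(s)\,ds+2\sigma\left(\int_0^T g_n(s)\,ds\right)^{1/2}\;\to\;0 .
\end{equation*}
This yields uniform convergence in probability, hence a.s.\ uniform convergence along a subsequence, and the pointwise monotonicity $X_0\le X_{\delta_{n+1}}\le X_{\delta_n}$ squeezes the full sequence; since $|\sqrt{x}-\sqrt{y}|\le\sqrt{|x-y|}$, the same follows for $Y_{\delta_n}$. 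With this estimate both Dini and Yamada--Watanabe become unnecessary, and the rest of your argument goes through verbatim.
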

Theorem \ref{th: representation of L from the right} essentially concerns the case $\delta \to 0+$ but does not discuss what happens when $\delta\to 0-$, so we finalize the Section by filling this gap. 

\begin{theorem}
    Let $\{\delta_n,~n\ge 1\}$ be an arbitrary positive sequence such that $\delta_n \downarrow 0$, $n\to\infty$. Then, with probability 1, for any $T>0$
    \begin{equation}\label{eq: a.s. convergence of Y from the left}
        \sup_{t\in[0,T]} |Y_{-\delta_n}(t) - Y_0(t)| \to 0
    \end{equation}
    and
    \begin{equation}\label{eq: a.s. convergence of L from the left}
       \sup_{t\in[0,T]} \left| L_{-\delta_n}(t) - L_0(t) \right| \to 0
    \end{equation}
    as $n\to \infty$.
\end{theorem}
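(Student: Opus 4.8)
The plan is to reduce everything to an almost-sure uniform estimate on the underlying CIR processes $X_{-\delta_n}$ and $X_0$, which are driven by the \emph{same} Brownian motion $W$ through \eqref{eq: family of CIR}, and then to transfer it to $Y$ and $L$. The starting point is the comparison theorem for solutions of SDEs (see e.g. \cite{IW1977}): all $X_\delta$ share the diffusion coefficient $\sigma\sqrt{x}$ and differ only through the additive constant $\delta$ in the drift, and since $-\delta_1 < -\delta_2 < \cdots < 0$, one obtains $X_{-\delta_1}(t) \le X_{-\delta_2}(t) \le \cdots \le X_0(t)$ for all $t \ge 0$ with probability $1$. In particular, the differences $D_n(t) := X_0(t) - X_{-\delta_n}(t)$ are non-negative, and $\sup_{t\in[0,T]}D_n(t)$ is non-increasing in $n$ almost surely.

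First I would control $D_n$ in expectation. Subtracting the equations \eqref{eq: family of CIR} for $X_0$ and $X_{-\delta_n}$ gives
\[
    D_n(t) = \delta_n t - b\int_0^t D_n(s)\,ds + \sigma\int_0^t\left(\sqrt{X_0(s)} - \sqrt{X_{-\delta_n}(s)}\right)dW(s).
\]
Because $\left(\sqrt{X_0} - \sqrt{X_{-\delta_n}}\right)^2 \le |X_0 - X_{-\delta_n}| = D_n \le X_0$ and $X_0$ has a.s. integrable paths with finite first moment, the stochastic integral is a square-integrable martingale; taking expectations and applying Grönwall's inequality yields $\mathbb E[D_n(t)] \le \delta_n\, t\, e^{|b|t}$, whence $\mathbb E\int_0^T D_n(s)\,ds \le C_T\delta_n$. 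Next, the Burkholder--Davis--Gundy inequality together with the same bound $(\sqrt{X_0}-\sqrt{X_{-\delta_n}})^2 \le D_n$ gives
\[
    \mathbb E\sup_{t\in[0,T]}\left|\int_0^t\left(\sqrt{X_0(s)}-\sqrt{X_{-\delta_n}(s)}\right)dW(s)\right| \le C\left(\mathbb E\int_0^T D_n(s)\,ds\right)^{1/2} \le C'\delta_n^{1/2},
\]
so, returning to the equation for $D_n$, $\mathbb E\sup_{t\in[0,T]}D_n(t) \le C''\bigl(\delta_n + \delta_n^{1/2}\bigr) \to 0$. Hence $\sup_{t\in[0,T]}D_n(t) \to 0$ in probability; being non-increasing in $n$, the sequence $\sup_{t\in[0,T]}D_n(t)$ therefore converges almost surely to its in-probability limit $0$, i.e. $\sup_{t\in[0,T]}|X_{-\delta_n}(t) - X_0(t)| \to 0$ a.s.

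The transfer to $Y$ and $L$ is then immediate. Since $|\sqrt{u}-\sqrt{v}| \le \sqrt{|u-v|}$, we have $\sup_{t\in[0,T]}|Y_{-\delta_n}(t)-Y_0(t)| \le \bigl(\sup_{t\in[0,T]}D_n(t)\bigr)^{1/2} \to 0$ a.s., which is exactly \eqref{eq: a.s. convergence of Y from the left}. For the reflection terms, subtracting the representations $Y_\delta(t) = \sqrt{x_0} - \frac b2\int_0^t Y_\delta(s)\,ds + \frac\sigma2 W(t) + L_\delta(t)$ for $\delta = -\delta_n$ and $\delta = 0$ cancels the common Brownian term and leaves
\[
    L_{-\delta_n}(t) - L_0(t) = \left(Y_{-\delta_n}(t)-Y_0(t)\right) + \frac b2\int_0^t\left(Y_{-\delta_n}(s)-Y_0(s)\right)\,ds,
\]
so that $\sup_{t\in[0,T]}|L_{-\delta_n}(t)-L_0(t)| \le \bigl(1+\tfrac{|b|T}{2}\bigr)\sup_{t\in[0,T]}|Y_{-\delta_n}(t)-Y_0(t)| \to 0$ a.s., giving \eqref{eq: a.s. convergence of L from the left}.

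The main obstacle is that the diffusion coefficient $\sigma\sqrt{x}$ is only H\"older-$\tfrac12$, so one cannot run a naive Grönwall/Picard estimate directly on $\sup_t|X_{-\delta_n}-X_0|$. The approach circumvents this in two complementary ways: the comparison theorem supplies both the sign $D_n \ge 0$ and the monotonicity in $n$ that is essential for upgrading convergence in probability to almost-sure convergence, while the elementary inequality $(\sqrt{u}-\sqrt{v})^2 \le |u-v|$ turns the half-H\"older difficulty to our advantage, since $|X_0 - X_{-\delta_n}| = D_n$ is precisely the quantity we control in $L^1$. The one step genuinely requiring care is verifying the hypotheses of the comparison theorem for these square-root SDEs, which rests on the Yamada--Watanabe condition rather than on Lipschitz continuity of the coefficients.
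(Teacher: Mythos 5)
Your proof is correct, and it follows a genuinely different route through the middle of the argument than the paper does. Both proofs open identically, with the comparison theorem giving $X_{-\delta_n}\le X_{-\delta_{n+1}}\le X_0$ a.s., and both close by exploiting monotonicity to upgrade convergence in probability to almost-sure convergence. But where the paper cites an external result (Theorem 4.1 of Mishura's paper) to get only the pointwise-in-$t$ bound $\sup_{t\in[0,T]}\mathbb E|X_{-\delta_n}(t)-X_0(t)|\to 0$, deduces pointwise a.s. convergence of $Y_{-\delta_n}(t)$ and $\int_0^t Y_{-\delta_n}$, and then needs Dini's theorem (twice: once for $Y$, once for $L$, each time using monotonicity in $n$ and continuity of the limit) to convert pointwise into uniform convergence, you instead prove a self-contained maximal estimate $\mathbb E\sup_{t\in[0,T]}|X_0(t)-X_{-\delta_n}(t)|\le C(\delta_n+\delta_n^{1/2})$ via Gr\"onwall and Burkholder--Davis--Gundy, with the inequality $(\sqrt u-\sqrt v)^2\le|u-v|$ converting the $L^1$ control of $D_n$ into control of the quadratic variation of the martingale term. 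This makes Dini unnecessary: monotonicity of $\sup_t D_n$ in $n$ (which you correctly extract from the pathwise ordering) upgrades the in-probability convergence of the supremum itself, and the passage to $Y$ via $|\sqrt u-\sqrt v|\le\sqrt{|u-v|}$ and to $L$ via subtraction of the two representations is then purely deterministic. What your approach buys is self-containedness and an explicit rate $O(\delta_n^{1/2})$ in $L^1$ for the uniform distance between the squared processes; what the paper's approach buys is brevity given the cited convergence result, since Dini requires no maximal inequality at all. One point worth making explicit in your write-up, which you flag but do not verify: the martingale property of $\int_0^\cdot(\sqrt{X_0}-\sqrt{X_{-\delta_n}})\,dW$ rests on $\mathbb E\int_0^T X_0(s)\,ds<\infty$, which holds because the CIR mean solves a linear ODE and is bounded on compacts; and the comparison theorem you invoke is exactly the one the paper uses (Ikeda--Watanabe), whose Yamada--Watanabe-type hypothesis is satisfied by $\sigma\sqrt x$ since $|\sqrt x-\sqrt y|^2\le|x-y|$.
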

\begin{proof}
    By \cite[Theorem 1.1]{IW1977}, for any $t\ge 0$, $X_{-\delta_n}(t) \le X_{-\delta_{n+1}}(t) \le X_0(t)$ a.s. Moreover, by \cite[Theorem 4.1]{Mishura2009},
    \[
        \sup_{t\in [0,T]} \mathbb E\left[\left|X_{-\delta_n}(t) - X_{0}(t)\right|\right] \to 0, \quad n \to \infty,
    \]
    and hence, for all $t\ge 0$,
    \[
        Y_{-\delta_n}(t) \xrightarrow{\mathbb P} Y_{0}(t), \quad n \to \infty,
    \]
    and
    \[
        \int_0^t Y_{-\delta_n}(s)ds \xrightarrow{\mathbb P} \int_0^t Y_{0}(s)ds, \quad n\to \infty.
    \]
    Therefore, since monotone convergence in probability implies almost sure convergence, for any $t\ge 0$
    \[
        Y_{-\delta_n}(t) \to Y_{0}(t)
    \]
    and
    \[
        \int_0^t Y_{-\delta_n}(s)ds \to \int_0^t Y_{0}(s)ds
    \]
    a.s. as $n \to \infty$ and hence, with probability 1,
    \begin{align*}
        L_{-\delta_n}(t) & = Y_{-\delta_n}(t) - \sqrt{x_0} + \frac{b}{2} \int_0^t Y_{-\delta_n} (s)ds - \frac{\sigma}{2} W(t)
        \\
        &\to  Y_{0}(t) - \sqrt{x_0} + \frac{b}{2} \int_0^t Y_{0} (s)ds - \frac{\sigma}{2} W(t)
        \\
        &= L_0(t), \quad n \to \infty.
    \end{align*}
    It remains to note that $Y_0$ as well as each $Y_{-\delta_n}$ have a.s. continuous paths and $\{Y_{-\delta_n}(t),~n\ge 1\}$ is non-decreasing a.s. w.r.t. $n$, which immediately yields \eqref{eq: a.s. convergence of Y from the left} by Dini's theorem. Similarly, $L_0$ as well as all $ L_{-\delta_n}$ are continuous with probability 1 and
    \begin{align*}
        L_{-\delta_n}(t) & = Y_{-\delta_n}(t) - \sqrt{x_0} + \frac{b}{2} \int_0^t Y_{-\delta_n} (s)ds - \frac{\sigma}{2} W(t)
        \\
        &\le Y_{-\delta_{n+1}}(t) - \sqrt{x_0} + \frac{b}{2} \int_0^t Y_{-\delta_{n+1}} (s)ds - \frac{\sigma}{2} W(t)
        \\
        & = L_{-\delta_{n+1}}(t),
    \end{align*}
    which implies \eqref{eq: a.s. convergence of L from the left}.
\end{proof}

\section{Acknowledgements}

The present research is carried out within the frame and support of the ToppForsk project nr. 274410 of the Research Council of Norway with title STORM: Stochastics for Time-Space Risk Models. The first  author is supported by The Swedish Foundation for Strategic Research, grant Nr. UKR22-0017.

\bibliographystyle{acm}
\bibliography{biblio}

\end{document}